\newtheorem{theorem}{Theorem}[section]
\newtheorem{lemma}[theorem]{Lemma}
\newtheorem{corollary}[theorem]{Corollary}
\theoremstyle{definition}
\newtheorem{remark}[theorem]{Remark}
\newtheorem{example}[theorem]{Example}
\newtheorem{definition}[theorem]{Definition}
\newtheorem{assumption}{Assumption}
\numberwithin{equation}{section}
\definecolor{myblue}{RGB}{0,0,128}
\def\Xint#1{\mathchoice
	{\XXint\displaystyle\textstyle{#1}}%
	{\XXint\textstyle\scriptstyle{#1}}%
	{\XXint\scriptstyle\scriptscriptstyle{#1}}%
	{\XXint\scriptscriptstyle\scriptscriptstyle{#1}}%
	\!\int}
\def\XXint#1#2#3{{\setbox0=\hbox{$#1{#2#3}{\int}$ }
		\vcenter{\hbox{$#2#3$ }}\kern-.57\wd0}}
\def\dashint{\Xint-}
\newcommand{\dx}{\,\mathrm{d}x}
\newcommand{\dy}{\,\mathrm{d}y}
\newcommand{\e}{\varepsilon}
\newcommand{\dist}{{\rm{dist}}}
\newcommand{\w}{\omega}
\newcommand{\R}{\mathbb{R}}
\newcommand{\N}{\mathbb{N}}
\newcommand{\F}{\mathcal{F}}
\newcommand{\D}{\mathrm{D}}
\begin{document}
	
\author{Lukas Koch}
\address[Lukas Koch]{MPI Mathematics in the Sciences, Inselstra\ss e 22, 04177 Leipzig, Germany}
\email{kochl@mis.mpg.de}

\author{Matthias Ruf}
\address[Matthias Ruf]{Section de math\'ematiques, EPF Lausanne, Station 8, 1015 Lausanne, Switzerland}
\email{matthias.ruf@epfl.ch}

\author{Mathias Sch\"affner}
\address[Mathias Sch\"affner]{Institut f\"ur Mathematik, MLU Halle-Wittenberg, Theodor-Lieser-Stra\ss e 5, 06120 Halle (Saale), Germany}
\email{mathias.schaeffner@mathematik.uni-halle.de}

\title{On the Lavrentiev gap for convex, vectorial integral functionals}

\begin{abstract}	
	We prove the absence of a Lavrentiev gap for vectorial integral functionals of the form
	$$
	\mathscr F: g+W_0^{1,1}(\Omega)^m\to\mathbb [0,+\infty],\qquad \mathscr F(u)=\int_\Omega J(x,\D u)\dx,
	$$
	where the boundary datum $g:\Omega\subset \R^d\to\R^m$ is sufficiently regular, $\xi\mapsto J(x,\xi)$ is convex and lower semicontinuous, satisfies $p$-growth from below and suitable growth conditions from above. More precisely, if $p\leq d-1$, we assume $q$-growth from above with $q\leq \frac{(d-1)p}{d-1-p}$, while for $p>d-1$ or $p=1$ if $ d=2$, we require essentially no growth conditions from above and allow for unbounded integrands. Concerning the $x$-dependence, we impose a well-known local stability estimate that is redundant in the autonomous setting, but in the general non-autonomous case can further restrict the growth assumptions. 
\end{abstract}

\maketitle
{\small
	\noindent\keywords{\textbf{Keywords:} Lavrentiev-gap, convex vectorial integral functionals, non-autonomous integrands}
	
	\noindent\subjclass{\textbf{MSC 2020:} 49K40}
}	


\section{Introduction}
In this paper we consider integral functionals of the form
\begin{align}\label{eq:prob}
	\int_\Omega J(x,\D u)\dx,
\end{align}
where $\Omega\subset\R^d$ is a bounded, open set with Lipschitz boundary, $u\colon \Omega\to \R^m$ and $J\equiv J(x,\xi)\colon \Omega\times\R^{m\times d}\to [0,+\infty]$ is such that $J(x,\cdot)$ is convex and lower semicontinuous. For parts of our results we additionally enforce the growth assumption
\begin{align}\label{eq:pq}
	\lvert \xi\rvert^p\leq J(x,\xi)\leq C(\lvert \xi\rvert^q+1), \quad \text{for all } \xi\in \R^{m\times d}.
\end{align}

Since the work of Lavrentiev \cite{Lavrentiev1926}, it is known that for a general integral functional $\F\colon W^{1,p}(\Omega)^m\to \R$, it may happen that
\begin{align*}
	\inf_{u\in W^{1,p}(\Omega)^m} \F[u] <\inf_{u\in C^\infty(\Omega)^m}\F[u].
\end{align*}
The occurrence of this phenomenon, which is known as Lavrentiev phenomenon, is a serious obstruction to regularity theory and numerical approximation, when it occurs. In theories of nonlinear elasticity the Lavrentiev phenomenon is related to physical phenomena, such as cavitation, and hence of particular interest \cite{Foss2003}.

The theory of Lavrentiev's phenomenon for functionals of the form \eqref{eq:prob} with $J\equiv J(x,\D u)$, was developed in \cite{Zhikov1987}, \cite{Zhikov1993} and \cite{Zhikov1995}. In this paper, we adopt the viewpoint and terminology of \cite{Buttazo1992} and view the Lavrentiev phenomenon through the so-called Lavrentiev gap. 
Suppose $X$ is a topological space of weakly differentiable functions and $Y\subset X$. Introduce the sequentially lower semicontinuous (slsc) envelopes
\begin{align*}
	\overline \F_X = \sup\{\,\mathscr G\colon X\to[0,+\infty]: \mathscr G \text{ slsc }, \mathscr G\leq \F \text{ on } X\,\}\\
	\overline \F_{Y} = \sup\{\,\mathscr G\colon X\to[0,+\infty]: \mathscr G \text{ slsc }, \mathscr G\leq \F \text{ on } Y\,\}\nonumber.
\end{align*}
The Lavrentiev gap functional is then defined for $u\in X$ as
\begin{align*}
	L(u,X,Y)=\begin{cases}
		\overline \F_{Y}(u)-\overline \F_X(u)  &\text{ if } \overline \F_X(u)<+\infty,\\
		0 &\text{ else}.
	\end{cases}
\end{align*}
Note that $L(u,X,Y)\geq 0$ and that $L(u,X,Y)>0$ for some $u\in X$ when the Lavrentiev phenomenon occurs. However, in general, it can happen that $L(u,X,Y)>0$ for some $u\in X$, but the Lavrentiev phenomenon does not occur. There is an extensive literature on the Lavrentiev phenomenon and gap functional in this abstract set-up, an overview of which can be found in \cite{Buttazo1995}, \cite{Foss2001} to which we also refer for further references. Here, we only comment that in all situations we consider, it holds that $\overline \F_X[u] = \int_\Omega J(x,\D u)\dx$ and
\begin{align*}
	\overline \F_Y[u] = \inf \left\{\liminf_{j\to\infty} \int_\Omega J(x,\D u_j)\dx\colon (u_j)\subset Y, u_j\rightharpoonup u \text{ weakly in } X\right\}.
\end{align*}
Thus, the Lavrentiev phenomenon is related to approximation properties of function spaces and there is a wide literature available studying it from this angle, see \cite{Ahmida2018,Borowski2022a,Chlebicka2021} and references therein.

Integrands satisfying \eqref{eq:pq} were first studied in the seminal papers \cite{Marcellini1989,Marcellini1991}. There is by now an extensive literature regarding the regularity theory of the corresponding integral functionals. We refer to \cite{Mingione2006,Mingione2021} for an overview and further references.
Concerning the Lavrentiev gap and considering integrands $J\equiv J(x,\xi)$ satisfying $(p,q)$-growth \eqref{eq:pq}, in the local setting, $X=W^{1,p}_{\mathrm{loc}}(\Omega)^m$ and $Y=W^{1,q}_{\mathrm{loc}}(\Omega)^m$, the situation is somewhat well understood. In \cite{Esposito2004} it was shown that the Lavrentiev phenomenon can occur if
\begin{align*}
	p<d<d+\alpha<q,
\end{align*}
when $J(\cdot,\xi)$ is $\alpha$-H\"older-continuous. The dependence of the above restriction on $d$ was recently removed in \cite{Diening2020}. In light of the regularity theory obtained in \cite{Esposito2004}, this range of $(p,q)$ is sharp. In order to prove non-occurrence of the Lavrentiev phenomenon it seems unavoidable to impose further structure conditions than just H\"older-continuity of $J(\cdot,\xi)$. A variety of possible assumptions are known, see e.g. \cite{deFilippis2022,Esposito2019,Harjulehto2018}. The key to all assumptions is to ensure that $J(x,\xi)\sim J(y,\xi)$ if $x\sim y$ and $\lvert \xi\rvert$ is not too large.

Recently, there has been an increasing interest in proving lack of the Lavrentiev gap in the non-autonomous setting in the presence of boundary conditions. In other words, considering $X=g+W^{1,p}_0(\Omega)^m$ and $Y=g+W^{1,q}_0(\Omega)^m$ for a suitable boundary value $g$. \cite{Bulicek2022} considers functionals modeled on the double-phase functional $J\equiv \lvert \xi\rvert^p+a(x)\lvert \xi\rvert^q$. Then, for $g\in W^{1,q}(\Omega)^m$, the Lavrentiev gap vanishes if
\begin{align*}
	q<p+\alpha \max\left(1,\frac p d,\right).
\end{align*}
This approach has been extended to certain anisotropic functionals in \cite{Borowski2022}. In \cite{Borowski2023} the occurrence of the Lavrentiev phenomenon was studied for the double-phase functional under a family $S_\kappa$ of smoothness assumptions. In particular, it was shown that the Lavrentiev phenomenon cannot occur if
\begin{align*}
	q<p+\kappa.
\end{align*}
Moreover, it is shown that this range is sharp. $C^{0,\alpha}$ and $C^{1,\alpha}$-regularity of $J(\cdot,\xi)$ implies that $J$ lies in $S_\alpha$ and $S_{1+\alpha}$, respectively. For $\kappa\geq 1$, the spaces $S_\kappa$ do not necessarily contain all integrands $J$ with $C^{\lfloor\kappa\rfloor,\kappa-\lfloor \kappa\rfloor}$-continuous $x$-dependence. However, none of these results cover the general non-autonomous, vectorial setting: in \cite{Bulicek2022,Borowski2023} the integrands depend essentially only on the norm of the gradient, while in \cite{Borowski2022} the authors require an additive decomposition into integrands depending only on the gradient of a single component of the vector-valued map $u$.

In this work we focus specifically on the vectorial setting. Let us first describe our results for autonomous integrands, that is $J\equiv J(\xi)$. Here, it is known that the Lavrentiev gap cannot occur in the one-dimensional case $d=1$ \cite{Alberti1994}, nor in the scalar case $m=1$ \cite{Dearcangelis1999}. Recently, it has even been shown that in the scalar setting $m=1$, there is no gap for functionals of the form $\int_\Omega f(u,\D u)\dx$, assuming only convexity in the gradient variable \cite{Bousquet2022}.
In the vectorial setting $d,m>1$, when $J\equiv J(\xi)$, it is easy to see by mollification arguments that without imposing boundary conditions, the Lavrentiev gap cannot occur, at least on compactly contained subsets of $\Omega$, regardless of the values of $(p,q)$ in \eqref{eq:pq}. While it is possible that even under the presence of sufficiently smooth boundary conditions, the Lavrentiev phenomenon/gap cannot occur, this seems non-trivial to show and the situation is unclear. In fact, we are not aware of any result focusing on the autonomous, vectorial case. In this paper we show that if $p>d-1$  or $p=1$ if $d=2$ , the Lavrentiev gap functional vanishes without any upper growth assumption. In fact, we even admit infinite integrands, our only assumption being that $0\in \mathrm{int}(\mathrm{dom}\; J)$. In case $p\leq d-1$, we prove that the Lavrentiev gap vanishes if $q<\frac{(d-1)p}{d-1-p}$ ($q<+\infty$ if $p=d-1$). Thus, our results can be seen as a partial extension of the scalar results to the vectorial setting, covering all exponents in dimension two.

Turning to the non-autonomous case, we require in addition the following standard stability estimate: for every $M>1$ there exists $C_M>0$ and a non-negative function $\alpha_M\in L^1(\Omega)$ such that
\begin{equation}\label{intro:stability}
	J(x,\xi)\leq C_M\left(\text{ess}\inf_{y\in \overline{B_{\delta}(x)}\cap\Omega}J(y,\cdot)\right)^{**}(\xi)+\alpha_M(x)
\end{equation}
for all $0<\delta\ll 1$, all $x\in\Omega$ and all $|\xi|\leq M\delta^{-d/p}$, where $^{**}$ denotes the biconjugate function with respect to the $\xi$-variable. For a more detailed discussion of this assumption see Remark \ref{r.onassump} (iii). Here the novelty of our results is that in contrast to \cite{Bulicek2022,Borowski2022,Borowski2023} we do not impose any structure on the gradient dependence. This comes with the drawback that, considering for instance an integrand of the form $J(x,\xi)=V_0(\xi)+a(x)V_1(\xi)$, where $V_0,V_1$ are non-negative and convex, $0\leq a\in C^{0,\alpha}(\Omega)$, $V_0$ satisfies a two-sided $p$-growth condition, and $V_1(\xi)\leq C(|\xi|^q+1)$, the admissible range of exponents is restricted to
\begin{align*}
	q<p+\alpha \frac p d,
\end{align*}
since without structural assumptions we are not able to reduce the analysis to $L^{\infty}$-functions via truncation. However, especially in the fast-growth regime $p>d-1$, we can handle new types of integrands, see Example \ref{example}. In particular, we highlight that we do not require any form of $\Delta_2$-condition. 

Using classical arguments, our results both for the case $p\leq d-1$ (Theorem \ref{thm:main}) and the case $p>d-1$ or $p=1$ if $d=2$ (Theorem \ref{thm:2}) also allow us to treat non-convex integrands with convex growth, see Corollary \ref{c.nonconvex}.

Our techniques in the convex case rely on a by now standard approach. By pulling in the boundary data, we put ourselves in a situation where mollification arguments apply. In the autonomous situation, there is no interaction between the two scales, while in the non-autonomous set-up it is essential to use the same scale for both the boundary argument and the mollification. The novelty of our approach lies in a careful use of convexity (especially for unbounded integrands) in combination with the use of an ad-hoc partition of unity that is adapted to the function $u\in g+W^{1,p}_0(\Omega)^m$. It is at this point that the integrability of the map $u$ itself has to be compatible with the growth of the energy, which then forces a relation between the $W^{1,p}$-regularity of $u$ and the upper bounds on the energy. In particular, the assumptions could be drastically weakened if we either can assume that $u\in L^{\infty}(\Omega)^m$ or when $\Omega$ is star-shaped with respect to a ball (in this case no partition of unity is necessary).

The outline of our paper is as follows: In Section \ref{sec:main} we establish our notation, collect the assumptions and state our main results. We also give examples of the types of integrands we are able to consider there. The proofs are postponed to Section \ref{sec:proofs}, while in Appendix \ref{app:stronglystarshaped} we provide the necessary details to extend our results to locally star-shaped sets in the case of autonomous integrands. 

\section{Statement of the main result}\label{sec:main}
Let $d\geq 2$ and let $\Omega\subset\R^d$ be a bounded, open set with Lipschitz boundary and let $p\in [1,+\infty)$. We consider non-autonomous integral functionals of the form
\begin{equation*}
	F(u)=\int_\Omega J(x,\D u(x))\dx\in [0,+\infty],\qquad u\in W^{1,p}(\Omega)^m,
\end{equation*}
where the integrand $J:\Omega\times\R^{m\times d}\to [0,+\infty]$ has the following properties:
\begin{assumption}\label{ass1}
The function $J:\Omega\times\R^{m\times d}\to [0,+\infty]$ is jointly measurable and satisfies
\begin{enumerate}[label=(a\arabic*)]
	\item\label{(a1)} for a.e. $x\in \Omega$ the map $\xi\mapsto J(x,\xi)$ is convex and lower semicontinuous;
	\item\label{(a2)} for a.e. $x\in \Omega$ it holds that $|\xi|^p\leq J(x,\xi)$ for all $\xi\in\R^{m\times d}$;
	\item\label{(a3)} there exist $C,\theta>0$ and a non-negative function $\alpha \in L^1(\Omega)$ such that
	\begin{equation}\label{eq:growth_ass}
		\begin{cases}
			J(x,\xi)\leq C|\xi|^q+\alpha(x) &\mbox{if $p\leq d-1$  and $d\geq 3$,}
			\\
			\sup_{|\xi|\leq\theta }J(x,\xi)\in L^1(\Omega) &\mbox{if $p>d-1$ or $d=2$ ,}
		\end{cases}
	\end{equation}
	where in the first case the exponent $q>p$ satisfies the bound
	\begin{equation}
		\begin{cases} q\leq \frac{(d-1)p}{d-1-p} &\mbox{if $ p  <d-1$,}
			\\
			q< +\infty &\mbox{if $p=d-1$.}
		\end{cases}
	\end{equation}
	\item\label{(a4)} for every $M>1$ there exists $C_M>0$ and a non-negative function $\alpha_M\in L^1(\Omega)$ such that
	\begin{equation}\label{eq:stability}
		J(x,\xi)\leq C_M\left(\text{ess}\inf_{y\in \overline{B_{\delta}(x)}\cap\Omega}J(y,\cdot)\right)^{**}(\xi)+\alpha_M(x)
	\end{equation}
	for all $0<\delta\ll 1$, all $x\in\Omega$ and all $|\xi|\leq M\delta^{-d/p}$. Here $^{**}$ denotes the biconjugate function with respect to the $\xi$-variable.
\end{enumerate}
\end{assumption}
Let us comment on the assumptions.
\begin{remark}\label{r.onassump}
\begin{enumerate}[label=(\roman*)]
	\item Our assumptions yield new results especially in the autonomous setting, where Assumption \ref{(a4)} is redundant.
	\item The integrability assumption on the local supremum in \eqref{eq:growth_ass} is equivalent to the integrability of $J(x,\xi_i)$ for finitely many $\xi_1,\ldots\xi_n$ that generate a simplex with non-empty interior. This follows from the fact that convex functions attain their maximum in the corners of such a simplex.
	\item Assumption \ref{(a4)} is a slight variation of the version in \cite[p. 89]{Chlebicka2021}, where it was used to show modular density of $C_c^{\infty}$ in Sobolev-Musielak-Orlicz spaces. It is of very abstract nature and allows us to control the effect of convolution in the non-autonomous setting. In \cite[Lemma 6.2]{Bulicek2022} it is shown that the biconjugate can be omitted in the isotropic setting, i.e, when $J$ only depends on the modulus of $|\xi|$, and when $J$ satisfies a polynomial upper bound. On the other hand, in \cite[Remark 3.7.6]{Chlebicka2021} it is emphasized that in the general anisotropic case there is no hope to avoid the convex envelope. It can be omitted provided the infimum still defines a convex, lower semicontinuous function with respect to $\xi$ (see \cite[Theorem 4.92 (iii)]{FoLe}), which is for instance ensured by the property that the infimum is attained at a point $\widehat{y}\in \overline{B_{\delta}(x)}$ that does not depend on $\xi$. This condition was considered for instance in \cite{Esposito2019} to prove the absence of a local Lavrentiev gap for non-autonomous integrands with $(p,q)$-growth. We emphasize that in general \ref{(a4)} can impose further restrictions on the exponents in \ref{(a3)}. Some classes of integrands satisfying \ref{(a4)} are presented in Example \ref{example}.
\end{enumerate}
\end{remark}
We first state the theorem for $p\leq d-1$, where our assumptions cover the case of functionals with $(p,q)$-growth.
\begin{theorem}\label{thm:main}
Let $p\in [1,d-1]$. Assume $J:\Omega\times\R^{m\times d}\to [0,+\infty)$ satisfies Assumption \ref{ass1}. Let $g\in W^{1,p}(\Omega)^m$ be such that $\int_\Omega J(x,s\D g(x))\dx<+\infty$ for all $s\in\R$. Then for all $u\in g+W^{1,p}_0(\Omega)^m$ there exists a sequence $u_n\in g+C_c^{\infty}(\Omega)^m$ such that $u_n\to u$ in $W^{1,p}(\Omega)^m$ and 
\begin{equation*}
	\lim_{n\to +\infty}\int_\Omega J(x,\D u_n)=\int_\Omega J(x,\D u)\dx.
\end{equation*}
In particular, it holds that
\begin{equation*}
	\inf_{u\in g+W^{1,p}_0(\Omega)^m}\int_\Omega J(x,\D u)\dx =\inf_{u\in g+C_c^{\infty}(\Omega)^m}\int_\Omega J(x,\D u)\dx.
\end{equation*}
\end{theorem}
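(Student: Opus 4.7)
The plan is to build, for each small $\delta>0$, an admissible approximation $u_\delta\in g+C_c^\infty(\Omega)^m$ by simultaneously \emph{pulling in} the boundary data of $v:=u-g\in W^{1,p}_0(\Omega)^m$ and \emph{mollifying} the interior, with both operations carried out at the same scale $\delta$. Extending $v$ by zero outside $\Omega$, I would fix a smooth cutoff $\eta_\delta\in C_c^\infty(\Omega)$ with $\eta_\delta\equiv 1$ on $\{\dist(\cdot,\partial\Omega)>3\delta\}$, $\eta_\delta\equiv 0$ on $\{\dist(\cdot,\partial\Omega)<2\delta\}$, and $|\D\eta_\delta|\lesssim\delta^{-1}$, and set
\[u_\delta:=g+\eta_\delta\,(\rho_\delta\ast v),\]
where $\rho_\delta$ is a standard mollifier of scale $\delta$. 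Since $\Omega$ is only Lipschitz, a partition-of-unity argument localizing on small boundary patches that are star-shaped with respect to an interior cone (cf.\ Appendix~\ref{app:stronglystarshaped}) is needed to make the pulling-in globally well-defined.

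Strong $W^{1,p}$-convergence $u_\delta\to u$ is then routine, using the standard approximation $\rho_\delta\ast v\to v$ together with Hardy's inequality $\int_\Omega(|v|/\dist(\cdot,\partial\Omega))^p\dx\leq C\int_\Omega|\D v|^p\dx$ to handle the term $(\rho_\delta\ast v)\D\eta_\delta$ on the boundary strip. The real content is to prove $F(u_\delta)\to F(u)$; by weak-$W^{1,p}$ lower semicontinuity (assured by \ref{(a1)} and \ref{(a2)}) only the $\limsup$ requires work. I would split $\Omega=I_\delta\cup A_\delta$ into the interior region $I_\delta=\{\eta_\delta=1\}$ and the boundary annulus $A_\delta=\{\dist(\cdot,\partial\Omega)<3\delta\}$ and treat the two contributions separately.

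On $I_\delta$, $\D u_\delta=\D g+\rho_\delta\ast\D v$, and convexity yields
\[W(x,\D u_\delta)\leq\tfrac12 W(x,2\D g)+\tfrac12 W(x,2\rho_\delta\ast\D v).\]
The first summand is integrable and passes to the limit by dominated convergence thanks to the hypothesis $\int_\Omega W(x,s\D g)\dx<+\infty$ for every $s>0$. For the second, Young's inequality gives $\|\rho_\delta\ast\D v\|_{L^\infty}\lesssim\delta^{-d/p}\|\D v\|_{L^p}$, which places the argument in precisely the regime where the stability assumption \ref{(a4)} applies. Combining \ref{(a4)} with Jensen's inequality applied to the convex biconjugate inside the convolution integral (using that $(\text{ess}\inf_{y\in B_\delta(x)}W(y,\cdot))^{**}(\xi)\leq W(y,\xi)$ for any $y$ in the ball), then interchanging with Fubini and invoking dominated convergence, transfers the mollification from the gradient onto the $x$-variable and closes the interior estimate.

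The main obstacle is the boundary-layer estimate on $A_\delta$, which has measure $\lesssim\delta$. Here $\D u_\delta$ carries the additional term $(\rho_\delta\ast v)\D\eta_\delta$ of modulus $\lesssim|v|/\delta$, and the same convex split combined with the $q$-growth of \ref{(a3)} reduces matters to showing
\[\int_{A_\delta}\Big(\frac{|v|}{\delta}\Big)^q\dx\longrightarrow 0\quad\text{and}\quad\int_{A_\delta}|\rho_\delta\ast\D v|^q\dx\longrightarrow 0\]
as $\delta\to 0$. This is exactly where the restriction $q\leq(d-1)p/(d-1-p)$ is needed: foliating $A_\delta$ by the level sets $\Sigma_t=\{\dist(\cdot,\partial\Omega)=t\}$, expressing $v|_{\Sigma_t}$ via the fundamental theorem of calculus along the inward normal (using $v|_{\partial\Omega}=0$) and applying the critical Sobolev embedding $W^{1,p}(\Sigma_t)\hookrightarrow L^{(d-1)p/(d-1-p)}(\Sigma_t)$ combined with absolute continuity of the $L^p$-integral of $\D v$ in $t$ forces both integrals to vanish. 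The integrable residual terms involving $\D g$ on $A_\delta$ vanish by absolute continuity. The delicate interplay is that the cutoff produces a factor $\delta^{-1}$ on the gradient while the mollification produces a factor $\delta^{-d/p}$, and the Sobolev exponent $(d-1)p/(d-1-p)$ is precisely the threshold at which $q$-growth still closes on these two scales.
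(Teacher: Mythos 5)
There is a genuine gap, and it sits exactly at the heart of the theorem: your boundary-layer claim
\begin{equation*}
\int_{A_\delta}\Bigl(\frac{|v|}{\delta}\Bigr)^q\dx\longrightarrow 0
\end{equation*}
is false for general $v\in W^{1,p}_0(\Omega)^m$ when $q>p$. Hardy's inequality only gives $|v|/\dist(\cdot,\partial\Omega)\in L^p$, and raising to the power $q$ does not come for free. Concretely, in local half-space coordinates take $v(x',t)=t^{1-\epsilon}w(x')\chi(t)$ with $\tfrac1q<\epsilon<\tfrac1p$ and $w\in W^{1,p}\cap L^q$ of the cross-section (such $w$ exist precisely up to the critical exponent $q=(d-1)p/(d-1-p)$). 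Then $v\in W^{1,p}_0$ near the boundary, but $\delta^{-q}\int_{A_\delta}|v|^q\sim\delta^{1-\epsilon q}\to+\infty$. The same defect appears in your foliation argument: you control $\int_0^{3\delta}\|\D v\|^p_{L^p(\Sigma_t)}\,\mathrm{d}t$, but what you need is $\int_0^{3\delta}\|\D v\|^q_{L^p(\Sigma_t)}\,\mathrm{d}t$ with $q>p$, and no absolute-continuity argument converts the former into the latter. This is why a \emph{fixed} cutoff with $|\D\eta_\delta|\lesssim\delta^{-1}$ on a shrinking layer cannot work. The paper's mechanism is different on both counts: the cutoff layer has a \emph{fixed} width $\delta_0$, the smallness comes from gluing the dilated functions $v_{k,i}=\rho_k v\circ T_{k,i}^{-1}$, which converge to $v$ in $W^{1,p}$, and—crucially—the cutoff is built \emph{adaptively} to the functions (Lemma \ref{L:optim}): its gradient is supported only on a ``good'' set of spheres where the restricted $W^{1,p}$-norm is bounded by its radial average, so that the trace embedding $W^{1,p}(S^{d-1})\hookrightarrow L^q(S^{d-1})$ yields $\|\nabla\eta\otimes(v_{k,i}-v)\|_{L^q}\lesssim\|v_{k,i}-v\|_{W^{1,p}}\to0$. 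Without this adapted choice the $L^q$ bound on $\nabla\eta\otimes v$ fails, for the same reason your foliation estimate fails.

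A secondary point: your convex splitting $W(x,\D g+\rho_\delta\ast\D v)\leq\tfrac12W(x,2\D g)+\tfrac12W(x,2\rho_\delta\ast\D v)$ with fixed weights can at best give $\limsup_\delta F(u_\delta)\leq\tfrac12\int W(x,2\D g)+\tfrac12\int W(x,2\D v)$, which is not $\int W(x,\D u)$. To recover the actual limit you must split with weights $t$ and $1-t$, send $t\uparrow1$ so that the error terms are damped by the factor $1-t$, and conclude by a diagonal argument together with a Fatou-type argument for the $\liminf$ of the difference $W(x,\tfrac{s}{s-1}\D g)+W(x,s\D v_n)-W(x,\D g+\D v_n)$; this is how the paper passes from the approximation with zero boundary datum to the stated convergence of energies.
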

The next theorem concerns the cases $p>d-1$ and $p=1$ if $d=2$ with possibly unbounded integrands. Here we need to assume stronger regularity of the boundary datum. 
\begin{theorem}\label{thm:2}
	Let $p>d-1$ or $p=1$ if $d=2$. Assume $J:\Omega\times\R^{m\times d}\to [0,+\infty]$ satisfies Assumption \ref{ass1}. Let $g\in C_c^1(\R^d)^m$ satisfy $\int_\Omega J(x,s\D g)\dx<+\infty$ for some $s>1$. Then for all $u\in g+W^{1,p}_0(\Omega)^m$ there exists $u_n\in g+C_c^{\infty}(\Omega)^m$ such that $u_n\to u$ in $W^{1,p}(\Omega)^m$ and
	\begin{equation*}
		\lim_{n\to +\infty}\int_\Omega J(x,\D u_n)=\int_\Omega J(x,\D u)\dx.
	\end{equation*}
	In particular, it holds that
	\begin{equation*}
		\inf_{u\in g+W^{1,p}_0(\Omega)^m}\int_\Omega J(x,\D u)\dx =\inf_{u\in g+C^\infty_c(\Omega)^m}\int_\Omega J(x,\D u)\dx.
	\end{equation*}
\end{theorem}
Our third result concerns the case when in contrast to Theorem \ref{thm:2} the integrand $J$ is finite-valued, where the regularity of the boundary data can be weakened.
\begin{corollary}\label{c.locallybounded}
Let $p>d-1$ or $p=1$ if $d=2$. Assume that $J:\Omega\times\R^{m\times d}\to [0,+\infty)$ satisfies Assumption \ref{ass1}. Let $g\in W^{1,\infty}(\Omega)^m$. Then for all $u\in g+W^{1,p}_0(\Omega)^m$ there exists $u_n\in g+C_c^{\infty}(\Omega)^m$ such that $u_n\to u$ in $W^{1,p}(\Omega)^m$ and
\begin{equation*}
	\lim_{n\to +\infty}\int_\Omega J(x,\D u_n)=\int_\Omega J(x,\D u)\dx.
\end{equation*}
In particular, it holds that
\begin{equation*}
	\inf_{u\in g+W^{1,p}_0(\Omega)^m}\int_\Omega J(x,\D u)\dx =\inf_{u\in g+C^{\infty}_c(\Omega)^m}\int_\Omega J(x,\D u)\dx.
\end{equation*}
\end{corollary}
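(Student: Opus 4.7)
The plan is to reduce to Theorem \ref{thm:2} by approximating the Lipschitz boundary datum $g$ by smooth compactly supported functions and correcting the boundary datum afterwards using convexity and the $L^1$-sup property of $W$.

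First, I extend $g$ to a compactly supported element of $W^{1,\infty}(\R^d)^m$, still denoted $g$, and set $L:=\|\D g\|_\infty$. For $k\in \N$, let $g_k:=g*\rho_{1/k}\in C_c^\infty(\R^d)^m$ be a standard mollification; then $\|\D g_k\|_\infty\le L$, $g_k\to g$ in $W^{1,p}(\Omega)^m$, and $\D g_k\to \D g$ a.e.\ on $\Omega$. By Remark \ref{r.onassump}(ii), the hypothesis $x\mapsto W(x,\xi)\in L^1(\Omega)$ for every $\xi\in \R^{m\times d}$ implies $x\mapsto \sup_{|\xi|\le R} W(x,\xi)\in L^1(\Omega)$ for every $R>0$; in particular $\int_\Omega W(x,s\D g_k)\dx<\infty$ for every $s>1$, verifying the boundary-data hypothesis of Theorem \ref{thm:2}.

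For each $k$, applying Theorem \ref{thm:2} to $u_k:=u-g+g_k\in g_k+W^{1,p}_0(\Omega)^m$ yields $\phi_{k,n}\in C_c^\infty(\Omega)^m$ with $u_{k,n}:=g_k+\phi_{k,n}\to u_k$ in $W^{1,p}(\Omega)^m$ and $\int_\Omega W(x,\D u_{k,n})\dx\to\int_\Omega W(x,\D u_k)\dx$ as $n\to\infty$. I then set $\tilde u_{k,n}:=g+\phi_{k,n}\in g+C_c^\infty(\Omega)^m$. The identity $\tilde u_{k,n}=u_{k,n}+g-g_k$ yields $\tilde u_{k,n}\to u$ in $W^{1,p}$ as $n\to\infty$ for each fixed $k$, and $\D\tilde u_{k,n}=\D u_{k,n}+h_k$ with $h_k:=\D g-\D g_k$, $|h_k|\le 2L$, $h_k\to 0$ a.e.\ and in $L^p(\Omega)$. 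Equivalently, $\D\tilde u_{k,n}=\D u+(\D u_{k,n}-\D u_k)\to \D u$ in $L^p$.

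The main task is a diagonal extraction $n(k)\to\infty$ giving $\int_\Omega W(x,\D\tilde u_{k,n(k)})\dx\to\int_\Omega W(x,\D u)\dx$. The lower bound follows from lower semicontinuity. For the upper bound, convexity yields, for every $\lambda\in(0,1)$,
\[
\int_\Omega W(x,\D\tilde u_{k,n})\dx\le(1-\lambda)\int_\Omega W\!\left(x,\tfrac{\D u_{k,n}}{1-\lambda}\right)\dx+\lambda\int_\Omega W\!\left(x,\tfrac{h_k}{\lambda}\right)\dx.
\]
The second summand is bounded by $\lambda\int_\Omega\sup_{|\xi|\le 2L/\lambda}W(x,\xi)\dx$ and, by dominated convergence, tends to $\lambda\int_\Omega W(x,0)\dx$ as $k\to\infty$, then to $0$ as $\lambda\to 0^+$. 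The hard part---and the main obstacle---is the first summand, since without a $\Delta_2$-type condition on $W$ the scaled energy $\int W(x,\D u_{k,n}/(1-\lambda))\dx$ is not controlled by $\int W(x,\D u_{k,n})\dx$. My plan for this step is to revisit the construction underlying Theorem \ref{thm:2}, whose mollification scale $\delta_n\to 0$ is coupled via the stability assumption \ref{(a4)} to the range $|\xi|\le M\delta_n^{-d/p}$, and to observe that it can be carried out at any scale $s>1$ compatible with $\int W(x,s\D g_k)\dx<\infty$. Since this integrability holds for every $s>1$ in our setting, running the construction at scale $s=(1-\lambda)^{-1}$ yields $\int W(x,\D u_{k,n}/(1-\lambda))\dx\to \int W(x,\D u_k/(1-\lambda))\dx$ as $n\to\infty$; a coupled diagonal extraction in $(n,k,\lambda)$ then closes the upper bound and completes the proof.
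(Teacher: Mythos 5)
Your reduction to Theorem \ref{thm:2} via mollified boundary data $g_k$ is a legitimate starting point, and your observation that $x\mapsto\sup_{|\xi|\le R}W(x,\xi)\in L^1(\Omega)$ follows from the hypothesis and convexity is correct. But the proof has a genuine gap exactly where you flag "the hard part": the term $(1-\lambda)\int_\Omega W(x,\D u_{k,n}/(1-\lambda))\dx$ is not controlled, and your proposed fix does not close it. First, "running the construction of Theorem \ref{thm:2} at scale $(1-\lambda)^{-1}$" is not an available statement; it amounts to asserting an unproven strengthening of Lemma \ref{l.compactsupport} for the integrand $W(x,\xi/(1-\lambda))$. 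Second, even granting such a statement, its conclusion $\int_\Omega W(x,\D u_{k,n}/(1-\lambda))\dx\to\int_\Omega W(x,\D u_k/(1-\lambda))\dx$ is only useful if the limit is finite, and without a $\Delta_2$ condition there is no reason that $\int_\Omega W(x,\D u_k/(1-\lambda))\dx<+\infty$; you never perform the preliminary reduction (replacing $u$ by $g+t(u-g)$, $t\uparrow1$) that would give finiteness of a dilated energy, and even that reduction yields $F(s(u-g))<+\infty$ for \emph{some} $s>1$ applied to $u-g$, not to the full gradient $\D u_k$. Third, the final coupled limit $\lambda\to0^+$ of $(1-\lambda)\int_\Omega W(x,\D u_k/(1-\lambda))\dx$ again requires a $\Delta_2$-type bound to come back down to $\int_\Omega W(x,\D u)\dx$. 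The structural problem is that you assemble the full gradient first and only then try to absorb the boundary correction $h_k$, which forces you to dilate $\D u_{k,n}$ --- precisely the operation that unbounded convex integrands do not tolerate.

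The paper's route avoids this entirely and you should compare it with yours: apply Lemma \ref{l.compactsupport} with \emph{zero} boundary datum (after the standard reduction ensuring $\int_\Omega W(x,s\D(u-g))\dx<+\infty$ for some $s>1$) to produce $v_n\in C_c^\infty(\Omega)^m$ with $\int_\Omega W(x,s\D v_n)\dx\to\int_\Omega W(x,s\D(u-g))\dx$, and then reattach $g$ through the convex splitting
\begin{equation*}
\D g+\D v_n=\Bigl(1-\tfrac1s\Bigr)\tfrac{s}{s-1}\D g+\tfrac1s\,s\D v_n,
\end{equation*}
so that the dilation falls only on $\D g$ (harmless, since $g$ is Lipschitz and $\sup_{|\xi|\le R}W(\cdot,\xi)\in L^1(\Omega)$) and on $\D v_n$ (already controlled at scale $s$ by the lemma); the $\limsup$ inequality then follows from Fatou applied to the non-negative integrand $W(x,\tfrac{s}{s-1}\D g)+W(x,s\D v_n)-W(x,\D g+\D v_n)$. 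To repair your argument you would essentially have to import this splitting, at which point the mollification of $g$ becomes unnecessary.
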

Our final result deals with non-convex integrands that have convex growth.
\begin{corollary}\label{c.nonconvex}
Let $G:\Omega\times\R^{m\times d}\to [0,+\infty]$ be jointly measurable and continuous in the second variable.	Assume that there exists a non-negative function $\alpha\in L^1(\Omega)$, a constant $C>1$ and a function $J:\Omega\times\R^{m\times d}\to [0,+\infty]$ as in either Theorem \ref{thm:main}, Theorem \ref{thm:2} or Corollary \ref{c.locallybounded} satisfying
\begin{equation*}
	\frac{1}{C}J(x,\xi)-\alpha(x)\leq G(x,\xi)\leq C J(x,\xi)+\alpha(x)\qquad\text{ for all }(x,\xi)\in\Omega\times\R^{m\times d}.
\end{equation*}
Then the conclusions of Theorem \ref{thm:main}, Theorem \ref{thm:2} or Corollary \ref{c.locallybounded} hold, respectively, with $J$ replaced by $G$. 
\end{corollary}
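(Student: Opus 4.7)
The plan is to reduce to the already-established convex case by applying the appropriate theorem (Theorem \ref{thm:main}, Theorem \ref{thm:2}, or Corollary \ref{c.locallybounded}) to the dominating convex integrand $W$, and then to transfer the $W$-recovery property to $G$ by a passage to the limit that relies on the two-sided sandwich bound and the continuity of $G$ in the gradient variable.

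First I would fix $u\in g+W^{1,p}_0(\Omega)^m$. The case $\int_\Omega G(x,\D u)\dx=+\infty$ is settled by taking any $W^{1,p}$-convergent sequence $u_n\in g+C^\infty_c(\Omega)^m$ (which exists by density of $C_c^\infty$ in $W^{1,p}_0$) and observing that a Fatou plus subsequence argument, based on a.e.\ convergence of subsequential gradients together with continuity of $G$, forces $\int_\Omega G(x,\D u_n)\dx\to+\infty$. I may thus assume $\int_\Omega G(x,\D u)\dx<+\infty$. The lower sandwich bound then yields $\int_\Omega W(x,\D u)\dx<+\infty$, while the upper bound transfers the growth hypothesis on $g$ from $W$ to $G$, so I can invoke the relevant theorem for $W$ to obtain a sequence $u_n\in g+C_c^{\infty}(\Omega)^m$ with $u_n\to u$ in $W^{1,p}(\Omega)^m$ and $\int_\Omega W(x,\D u_n)\dx\to\int_\Omega W(x,\D u)\dx$.

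The main step is to upgrade this to $\int_\Omega G(x,\D u_n)\dx\to\int_\Omega G(x,\D u)\dx$ via a subsequence-subsequence argument. From an arbitrary subsequence of $(u_n)$ I extract a further subsequence (still denoted $u_{n_k}$) along which $\D u_{n_k}\to \D u$ a.e.\ on $\Omega$, and continuity of $G$ in $\xi$ then yields $G(x,\D u_{n_k})\to G(x,\D u)$ a.e. Let $h_k(x):=CW(x,\D u_{n_k})+\alpha(x)$ and $h(x):=CW(x,\D u)+\alpha(x)$. Lower semicontinuity of $W(x,\cdot)$ gives $\liminf_k h_k\geq h$ a.e., while convergence of the $W$-integrals gives $\int_\Omega h_k\dx\to\int_\Omega h\dx<+\infty$. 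Ordinary dominated convergence applied to $(h-h_k)^+\leq h$, which tends to $0$ a.e., gives $\int_\Omega(h-h_k)^+\dx\to 0$, and combining with the convergence of integrals upgrades this to $h_k\to h$ in $L^1(\Omega)$. Since $0\leq G(x,\D u_{n_k})\leq h_k$ and $G(x,\D u_{n_k})\to G(x,\D u)$ a.e., the generalized dominated convergence theorem, with dominating sequence $h_k$ converging in $L^1$ to $h$, yields $\int_\Omega G(x,\D u_{n_k})\dx\to\int_\Omega G(x,\D u)\dx$, and the subsequence principle then delivers convergence of the whole sequence.

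The key technical point to watch is the upgrade from convergence of integrals to $L^1$-convergence for the auxiliary sequence $h_k$; this rests on the interplay between lower semicontinuity of $W$, a.e.\ convergence of the gradients, and convergence of the $W$-integrals, and no structural assumption on $G$ beyond continuity in $\xi$ and the two-sided growth bound is used. The infimum identity finally follows at once from the approximation statement applied to a minimizing sequence of the left-hand side, together with $g+C^\infty_c(\Omega)^m\subset g+W^{1,p}_0(\Omega)^m$.
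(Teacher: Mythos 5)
Your argument is correct and rests on the same ingredients as the paper's proof: the recovery sequence for $W$, a.e.\ convergence of the gradients along subsequences, continuity of $G$ and lower semicontinuity of $W$ in $\xi$, and the two-sided sandwich bound. The paper packages the final step as a single application of Fatou's lemma to the non-negative integrand $-G(x,\xi)+CW(x,\xi)+\alpha(x)$ followed by a rearrangement, whereas you first upgrade $h_k=CW(\cdot,\D u_{n_k})+\alpha$ to $L^1$-convergence via a Scheff\'e-type argument and then invoke the generalized dominated convergence theorem; these are equivalent routes to the same $\limsup$ inequality.
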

\begin{remark}\label{r.comments} Some comments are in order:
	\begin{enumerate}[label=(\roman*)] 
		\item The integrability assumption on $g$ in Theorem \ref{thm:main} is satisfied whenever $g\in W^{1,q}(\Omega)^m$.
		\item For unbounded integrands the integrability assumption on $g$ in Theorem \ref{thm:main} would not be satisfactory. The weaker assumption in Theorem \ref{thm:2} comes with the drawback that we have to require $C^1$-regularity of $g$. If we assume that $J(\cdot,s\D g)\in L^1(\Omega)$ for all $s\in\R$, then in terms of regularity $g\in W^{1,p}(\Omega)^m$ suffices since we can use the same argument as for Theorem \ref{thm:main}.
		\item In Theorem \ref{thm:2} the slightly stronger integrability of $\D g$ for some $s>1$ is redundant if we assume a suitable $\Delta_2$-condition.
	\end{enumerate}
	
\end{remark}
\begin{example}\label{example} Below we provide some examples of integrands covered by our results.
\begin{enumerate}
\item ($(p,q)$-double phase functionals). Consider $J(x,\xi)=V_0(\xi)+a(x)V_1(\xi)$ with $0\leq a\in C^{0,\alpha}(\Omega)$, $\alpha\in(0,1]$ and $V_0,V_1$ are nonnegative, convex and satisfy $|\xi|^p\leq V_0(\xi)\leq C(|\xi|^p+1)$, $0\leq V_1(\xi)\leq C(|\xi|^q+1)$. Standard computations yield that $J$ satisfies Assumption~\ref{ass1} provided

%
$$
\mbox{(i) $\frac1p-\frac1{d-1}\leq\frac1q$ (to ensure (a3)),}\qquad\mbox{(ii) $\frac{q}p\leq 1+\frac{\alpha}d$ (to ensure (a4)).}
$$
%
Note that condition (ii) implies (i). For this class of integrands, if $V_0,\; V_1$ are even, the conclusions of Theorem~\ref{thm:main} are already known, see e.g. \cite{Bulicek2022,Borowski2022,Borowski2023} and in view of counterexamples in \cite{Diening2020,Esposito2004} are optimal for $p\geq d$. Let us mention that in \cite{Bulicek2022} the absence of the Lavrentiev gap is proven under weaker assumptions on the exponents, namely $q\leq p+\alpha \max\{1,\frac{p}d\}$, provided $V_0,V_1$ are radial. The argument in \cite{Bulicek2022} relies on an additional truncation argument which seems not applicable in the vectorial setting without additional structure assumptions on the integrand. 



\item (double phase without $\Delta_2$). Let us briefly describe an extreme version of the double phase functional covered by Corollary~\ref{c.nonconvex}: Fix exponents $p>d-1$, $0<q\leq\tfrac{d-1}{d}<1$ and $\alpha>\frac{dq}{p-dq}>0$, and consider
$$
G(x,\xi)=|\xi|^p+a(x_1)\exp(|\xi|^q)\quad\mbox{with}\quad a(s)=\exp\left(-\frac1{s^\alpha}\right){\mathds 1}_{(0,+\infty)}(s)
$$
where $x=(x_1,x')\in\R^d$. Since $q<1$, the function $G$ is not globally convex. However, a direct calculation of the second derivative shows that $x\mapsto \exp(x^q)$ is convex on $[x_q,+\infty)$, where $x^q_q:=\frac{1-q}{q}$, while it is concave on $[0,x_q]$. Based on this observation, it is not difficult to check that at $x_*^q:=\frac{1}{q}>x_q^q$ the tangent line $t_q(s)=\exp(x^q_*)qx_*^{q-1}s$ at the graph of $x\mapsto \exp(x^q)$ always lies below the graph (due to concavity it suffices to verify the two points $0$ and $x_q$; at $x_q$ the claim follows from convexity). Thus, defining the function 
\begin{equation*}
	J(x,\xi)=\begin{cases}
		|\xi|^p+a(x_1)\exp(|\xi|^q) &\mbox{if $|\xi|>x_*$},
		\\
		|\xi|^p+a(x_1) t_q(|\xi|)&\mbox{if $|\xi|\leq x_*$},
	\end{cases}
\end{equation*}
we have that $J\leq G\leq J+C_*$ with $C_*=\exp(x_*^q)$ and $J$ satisfies (a1)--(a3) of Assumption~\ref{ass1}. We check condition (a4). Fix $\delta\in(0,\frac12]$ and $M>1$. We distinguish the cases $x_1\leq 2\delta^\frac1{\alpha+1}$ and $x_1>2\delta^\frac1{\alpha+1}$. For $x=(x_1,x')$ with $x_1\leq 2\delta^\frac1{\alpha+1}$, we have for all  $|\xi|\leq M\delta^{-\frac{d}p}$
$$
J(x,\xi)\leq G(x,\xi) \leq |\xi|^p+\exp(-2^{-\alpha}\delta^{-\frac\alpha{\alpha+1}})\exp(M^q\delta^{-\frac{dq}p})\leq |\xi|^p+\alpha_M
$$
with $\alpha_M:=\sup_{\delta\in(0,\frac12]}\exp(-2^{-\alpha}\delta^{-\frac\alpha{\alpha+1}}+M^q\delta^{-\frac{dq}p})<\infty$, where we use that $\frac\alpha{\alpha+1}>\frac{dq}p$. Hence, stability condition \eqref{eq:stability} is satisfied. For $x=(x_1,x')$ with $x_1>2\delta^\frac{1}{\alpha+1}$ it holds that
$$
\left(\text{ess}\inf_{y\in \overline{B_{\delta}(x)}\cap\Omega}J(y,\cdot)\right)^{**}(\xi)\geq J((x_1-\delta)e_1,\xi)
$$
and an elementary computation, using $x_1-\delta>\frac{x_1}2$ and $x_1^{\alpha+1}\geq 2^{\alpha+1}\delta$, yields
\begin{align*}
\frac{a(x_1)}{a(x_1-\delta)}=&\exp\left(\int_{x_1-\delta}^{x_1}\frac{s^{-\alpha-1}}{\alpha+1}\,\mathrm{d}s\right)\leq \exp\left(\frac{\delta}{(\alpha+1)(x_1-\delta)^{\alpha+1}}\right)\leq\exp\left(\frac{\delta2^{\alpha+1}}{(\alpha+1)x_1^{\alpha+1}}\right)\\
\leq&\exp\left(\frac{1}{(\alpha+1)}\right).
\end{align*}
Hence, for all $M>1$ stability condition \eqref{eq:stability} is satisfied (with $C_M=\exp((\alpha+1)^{-1})$) also in that case.

We remark that the integrand in this example is non-doubling. In particular, it is not covered by \cite{Bulicek2022,Borowski2023}. In addition, it is easy to check that also the anistropy condition of \cite[Remark 4.1]{Borowski2022} does not hold.
\item (fast anisotropic autonomous growth) We give an example of the type of anisotropic integrand to which Theorem \ref{thm:2} applies.  Consider with $a_{i,j},\,q_{i,j}\geq 1$, 
\begin{align*}
J(\xi) = \begin{cases} 
e^{\sum_{i=1}^m \sum_{j=d}^m\lvert a_{i,j}\xi_{ij}\rvert^{q_{i,j}}} \qquad& \text{ if } \xi_{11}>-1\\
+\infty &\text{ else}
\end{cases}
\end{align*}
Note that \ref{(a1)} and \ref{(a2)} are clearly satisfied for any $p>1$.  Since $0\in \mathrm{int}(\mathrm{dom}\; J)$, \ref{(a3)} also holds.

\item If $p>d-1$, we can add to any of the examples above a penalization term of the form
\begin{equation*}
	\Phi_K(\xi)=\begin{cases}
		0 &\mbox{if $\xi\in K$,}\\
		+\infty &\mbox{otherwise,}
	\end{cases}
\end{equation*}
where $K\subset\R^{m\times d}$ is a convex set with $0\in \text{int}(K)$. Such a term of course limits the possible boundary conditions.
\end{enumerate}
\end{example}
\section{Proofs}\label{sec:proofs}

\subsection{Construction of cut-off functions} In the following lemma we generalize \cite[Lemma~4.10]{RS23} and construct suitable cut-off functions $\eta$ which optimize the integrability of $\nabla\eta \otimes u_i$ for finitely many functions $u_i$, exploiting the exponent improvement for the Sobolev embedding in lower dimensions;  see \cite{BS21,BC16} for related results. These cut-off functions are the key ingredient to localize our constructions.
\begin{lemma}\label{L:optim}
Let $N\in\mathbb N$, $p\geq 1$ and $q>p$ satisfy
\begin{equation*}
	\begin{cases}
		q\leq \frac{(d-1)p}{d-1-p} &\mbox{if $p<d-1$,}
		\\
		q<+\infty &\mbox{if $p=d-1$,}
		\\
		q=+\infty &\mbox{if $p>d-1$ or $d=2$.}
	\end{cases}
\end{equation*}
Then there exists $C=C_{N,d,m,p,q}<+\infty$ such that the following is true: 

For any ball $B_R=B_R(x_0)$, any $u_1,\dots,u_N\in W^{1,p}(B_R)^m$ and $\delta\in(0,\frac12]$ there exists $\eta\in W_0^{1,\infty}(B_R)$ satisfying
\begin{equation}\label{L:optim:eta}
0\leq \eta\leq 1,\quad \eta=1\quad\mbox{in $B_{(1-\delta)R}$},\quad\|\nabla \eta\|_{L^\infty(B_R)}\leq \frac{2}{\delta R}
\end{equation}
and for all $i\in\{1,\ldots,N\}$
\begin{equation}\label{L:optim:claim}
\|\nabla \eta \otimes u_i\|_{L^q(B_R)}\leq \frac{CR^{\frac{d}{q}-\frac{d}{p}-1}}{\delta ^{1+\frac{1}{p}-\frac{1}{q}}}\left(R\biggl(\int_{B_R\setminus B_{(1-\delta) R}}|\D u_i|^p\dx\biggr)^\frac1p+\biggl(\int_{B_R\setminus B_{(1-\delta) R}}| u_i|^p\dx\biggr)^\frac1p\right)
\end{equation}
with the convention that $d/+\infty=1/+\infty=0$. 
\end{lemma}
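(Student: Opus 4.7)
By the rescaling $\tilde u_i(y) := u_i(Ry)$ and $\eta(x) := \tilde\eta(x/R)$, the powers of $R$ on the two sides of \eqref{L:optim:claim} match automatically by the change of variables in the $L^p$- and $L^q$-norms, so it suffices to prove the claim for $R=1$. The construction centres on a radial cut-off $\eta(x) = \phi(|x|)$ whose derivative is concentrated on radii where the $u_i$ carry small tangential $L^p$-energy, rather than the naive linear $\phi$.

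For each $i$ set $H_i(r) := \|\nabla_T u_i\|_{L^p(S_r)}^p + \|u_i\|_{L^p(S_r)}^p$ on $(1-\delta, 1)$, so that by Fubini $\int_{1-\delta}^1 H_i\,dr \leq W_i^p$ with $W_i := \|u_i\|_{W^{1,p}(A)}$ and $A := B_1 \setminus B_{1-\delta}$. Discarding the trivial indices with $W_i = 0$, define the normalised aggregate density $\hat H(r) := \sum_{i : W_i > 0} H_i(r)/W_i^p$, for which $\int\hat H\,dr \leq N$. Fix $\varepsilon := N/\delta$ and prescribe
\[
|\phi'(r)| := \lambda\,(\hat H(r) + \varepsilon)^{-1/p}\quad\text{on }(1-\delta, 1),\qquad \phi' \equiv 0 \text{ elsewhere},
\]
with $\lambda > 0$ normalised so that $\int |\phi'|\,dr = 1$, and take $\phi : [0,1] \to [0,1]$ to be the decreasing Lipschitz primitive with $\phi \equiv 1$ on $[0, 1-\delta]$ and $\phi(1) = 0$. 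H\"older's inequality with exponents $\tfrac{p+1}{p}$ and $p+1$ applied to the identity $\delta = \int (\hat H+\varepsilon)^{-1/p}(\hat H+\varepsilon)^{1/p}\,dr$ yields $\lambda \leq (2N)^{1/p}\delta^{-(p+1)/p}$; combined with the crude pointwise bound $|\phi'| \leq \lambda\varepsilon^{-1/p}$ this gives $\|\nabla\eta\|_\infty \leq 2^{1/p}/\delta \leq 2/\delta$, verifying \eqref{L:optim:eta}.

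For the main estimate, in spherical coordinates
\[
\|\nabla\eta\otimes u_i\|_{L^q(B_1)}^q = \int_{1-\delta}^1 |\phi'(r)|^q\,\|u_i\|_{L^q(S_r)}^q\,dr,
\]
and the Sobolev embedding on each $(d-1)$-dimensional sphere $S_r$ with $r \in [1/2, 1]$ (uniform constants) gives, after raising to the $q$-th power and using $(a+b)^q \leq C(a^p+b^p)^{q/p}$, the estimate $\|u_i\|_{L^q(S_r)}^q \leq C\,H_i(r)^{q/p}$ exactly in the allowed range: $q \leq (d-1)p/(d-1-p)$ if $p < d-1$, any $q < \infty$ if $p = d-1$, or the pointwise sphere-Morrey bound $\|u_i\|_{L^\infty(S_r)} \leq C H_i(r)^{1/p}$ if $p > d-1$. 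Substituting and using the key observation $H_i/(\hat H + \varepsilon) \leq W_i^p$,
\[
\|\nabla\eta\otimes u_i\|_{L^q(B_1)}^q \leq C\lambda^q \int_{1-\delta}^1 \Bigl(\frac{H_i(r)}{\hat H(r)+\varepsilon}\Bigr)^{q/p}\,dr \leq C\lambda^q W_i^q\,\delta.
\]
Taking $q$-th roots and inserting the bound on $\lambda$ yields $\|\nabla\eta\otimes u_i\|_{L^q(B_1)} \leq C_N W_i\,\delta^{-(1+1/p-1/q)}$, which is the $R=1$ form of \eqref{L:optim:claim}; undoing the scaling restores the factor $R^{d/q-d/p-1}$ and turns $W_i$ into $R\|\nabla u_i\|_{L^p(A)} + \|u_i\|_{L^p(A)}$.

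The main obstacle is reconciling the three competing demands on $\phi$ — the normalisation $\int |\phi'| = 1$, the rigid $L^\infty$ bound $|\phi'| \leq 2/\delta$, and near-optimality against every $u_i$ simultaneously. The unregularised choice $|\phi'|\propto \hat H^{-1/p}$ would blow up on $\{\hat H = 0\}$; the regularisation $\hat H \mapsto \hat H + \varepsilon$ with $\varepsilon = N/\delta$ is tuned precisely so that H\"older's bound on $\lambda$ and the pointwise bound $\lambda\varepsilon^{-1/p}$ on $|\phi'|$ produce matching powers of $\delta$ and $N$. For $p > d-1$ the argument simplifies, since the sphere-Morrey bound is pointwise in $x$ and one takes a supremum in place of the final integral step.
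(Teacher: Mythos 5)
Your argument is correct and rests on the same core mechanism as the paper's proof -- slice the annulus into spheres, concentrate $\nabla\eta$ on radii where the spherical traces of the $u_i$ are controlled, and apply the Sobolev embedding $W^{1,p}(\Sph^{d-1})\hookrightarrow L^q(\Sph^{d-1})$ (resp. $L^\infty$ for $p>d-1$) on those spheres -- but the construction of $\eta$ is genuinely different. The paper uses a Chebyshev selection: it defines good sets $U_i,V_i$ of radii where the spherical energies are at most a fixed multiple of their average, shows $|U|\geq\delta R/2$ for the intersection, and takes $|\tilde\eta'|$ \emph{constant} equal to $1/|U|$ there. You instead take a continuously weighted profile $|\phi'|\propto(\hat H+\e)^{-1/p}$ and normalise by H\"older; the regularisation $\e=N/\delta$ plays the role of the paper's ``$|(\,\cdot\,)\setminus U_i|\leq\delta R/C$'' bound in guaranteeing the pointwise estimate $|\phi'|\leq 2/\delta$. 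Both yield identical powers of $\delta$, $R$ and the same $N$-dependence; the paper's version is slightly more elementary, yours makes the optimisation explicit and avoids intersecting $2N$ good sets.

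Two small repairs are needed. First, the H\"older step as written does not produce the claimed bound: splitting $1=(\hat H+\e)^{-1/p}(\hat H+\e)^{1/p}$ with exponents $\tfrac{p+1}{p}$ and $p+1$ gives $\bigl(\int(\hat H+\e)^{-\frac{p+1}{p^2}}\bigr)^{\frac{p}{p+1}}$ in the first factor, not $\bigl(\int(\hat H+\e)^{-1/p}\bigr)^{\frac{p}{p+1}}$. The correct splitting is $1=(\hat H+\e)^{-\frac{1}{p+1}}(\hat H+\e)^{\frac{1}{p+1}}$ with the same exponents, which yields $\delta\leq\bigl(\int(\hat H+\e)^{-1/p}\bigr)^{\frac{p}{p+1}}(2N)^{\frac{1}{p+1}}$ and hence your stated bound $\lambda\leq(2N)^{1/p}\delta^{-(p+1)/p}$; everything downstream is then fine. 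Second, defining $H_i(r)$ and invoking the sphere embedding for a.e.\ $r$ presupposes that the restriction of a $W^{1,p}$ function to a.e.\ sphere lies in $W^{1,p}(S_r)$ with tangential gradient given by the projection of $\D u_i$; this is standard but should be stated, either via the Fubini property of Sobolev functions or, as the paper does, by first proving the estimate for $C^1$ functions and then passing to the limit using the weak-$*$ compactness of the cut-offs in $W^{1,\infty}$ together with weak lower semicontinuity of the $L^q$-norm.
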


\begin{proof}

The result for the case $p>d-1$ was shown in \cite[Lemma 4.10]{RS23}, so we assume that $p\in [1,d-1]$ and extend the argument to these exponents. Without loss of generality, we can suppose that $x_0=0$. Set $S_1:=\{x\in \R^d\,:\,|x|=1\}$.

\smallskip

{\bf Step 1} We prove the statement for $u_1,\dots,u_N\in C^1(B_R)^m$. For $i\in\{1,\dots,N\}$ and $C:=4N$, we set
\begin{equation}\label{def:U}
U_i:=\biggl\{r\in[(1-\delta) R,R]\,:\,\int_{S_1}|\D u_i(rz)|^p\,\mathrm{d}\mathcal H^{d-1}(z)\leq \frac{C}{\delta(1-\delta)^{d-1}R^d}\int_{B_R\setminus B_{(1-\delta) R}}|\D u_i|^p\dx\biggr\}.
\end{equation}
Fubini's Theorem and the definition of $U_i$ in the form
\begin{align*}
\int_{B_R \setminus B_{(1-\delta) R}}|\D u(x)|^p\dx=&\int_{(1-\delta) R}^Rr^{d-1}\int_{S_1}|\D u(r z)|^p\,\mathrm{d}\mathcal H^{d-1}(z)\,\mathrm{d}r\\
\geq&((1-\delta) R)^{d-1}\int_{((1-\delta) R,R)\setminus U_i}\int_{S_1}|\D u(rz)|^p\,\mathrm{d}\mathcal H^{d-1}(z)\,\mathrm{d}r\\
>&\frac{C(\delta R-|U_i|)}{\delta R}\int_{B_R \setminus B_{(1-\delta) R}}|\D u(x)|^p\dx
\end{align*}
imply $|U_i|\geq (1-\frac1C)\delta R$, or equivalently $|(1-\delta R,R)\setminus U_i|\leq \frac{\delta R}C$. Analogously,
\begin{equation}\label{def:U2}
V_i:=\biggl\{r\in[(1-\delta) R,R]\,:\,\int_{S_1}|u_i(rz)|^p\,\mathrm{d}\mathcal H^{d-1}(z)\leq \frac{C}{\delta(1-\delta)^{d-1}R^d}\int_{B_R\setminus B_{(1-\delta) R}}|u_i|^p\dx\biggr\}
\end{equation}
satisfies $|V_i|\geq (1-\frac1C)\delta R$, or equivalently $|(1-\delta R,R)\setminus V_i|\leq \frac{\delta R}C$. Setting $U:=\bigcap_{i=1}^NU_i\cap V_i$, from the choice $C=4N$ we obtain
\begin{equation}\label{bound:lowerU}
|U|\geq \delta R-\frac{2N}C\delta R=\frac{\delta R}2. 
\end{equation}
Next, we define $\eta\in W^{1,\infty}(B_R;[0,1])$ by
$$
\eta(x)=\tilde \eta(|x|),\quad\mbox{where}\quad \tilde \eta(r)=\begin{cases}1&\mbox{if $r\in(0,(1-\delta) R)$,}\\\displaystyle \frac{1}{|U|}\int_{r}^R\chi_{U}(s)\,ds&\mbox{if $r\in ((1-\delta) R,R)$.}\end{cases}
$$
By definition, we have that $0\leq \eta\leq1$, $\eta=1$ in $B_{(1-\delta) R}$, $\eta\in W_0^{1,\infty}(B_R)$ and for $x=rz$ with $r\in[0,R]$ and $z\in S_1$
\begin{equation}
|\nabla \eta(rz)|=\begin{cases}0&\mbox{if $r\notin U$,}\\\frac1{|U|}&\mbox{if $r\in U$.}
\end{cases}
\end{equation}
Hence, recalling \eqref{bound:lowerU}, the map $\eta$ satisfies all the properties in \eqref{L:optim:eta}. 

Next, we use that the exponent $q\in(p,+\infty]$ is such that $W^{1,p}(S_1)^m$ embeds continuously into $L^q(S_1)^m$,\footnote{This can be proven in details using charts for the sphere.} so that there exists $C<+\infty$ such that for all $v\in C^1(S_1)^m$ we have
\begin{equation*}
\|v\|_{L^q(S_1)}\leq C \|D_\tau v\|_{L^p(S_1)}+C\|v\|_{L^p(S_1)}
\end{equation*} 
%
where $\D_\tau$ denotes the tangential derivative. Applying the above estimate to $v_r\in C^1(S_1)^m$ defined by $v_r(z):=u(r z)$ for all $z\in S^1$ with $u\in C_1(B_R)^m$, we obtain with the chain rule
%
\begin{align}
\|u(r\cdot)\|_{L^q(S_1)}\leq C r \|\D u(r\cdot) \|_{L^p(S_1)}+C\|u(r\cdot)\|_{L^p(S_1)}.\label{compact:embedding1}
\end{align} 
Hence, via the change of coordinates $x=rz$ with $r=|x|$ and $z=\frac{x}{|x|}$ we deduce that 
\begin{align*}
\|\nabla \eta\otimes u_i\|_{L^q(B_R)}\leq&|U|^\frac1qR^{\frac{d-1}q}\|\nabla \eta\|_{L^\infty(B_R)}\sup_{r\in U}\|u_i(r\cdot)\|_{L^q(S_1)}\\
\leq&\frac{CR^{\frac{d-1}{q}}}{|U|^{1-\tfrac{1}{q}}} \sup_{r\in U}\left(r\|\D u_i(r\cdot) \|_{L^p(S_1)}+\|u_i(r\cdot)\|_{L^p(S_1)}\right)
\\
\leq& \frac{CR^{\frac{d}{q}-\frac{d}{p}-1}}{\delta ^{1+\frac{1}{p}-\frac{1}{q}}}\left(R \|\D u_i\|_{L^p(B_R\setminus B_{(1-\delta)R})}+\|u_i\|_{L^p(B_R\setminus B_{(1-\delta)R})}\right)
\end{align*}
where we use the definition of $U$, \eqref{bound:lowerU} and $1-\delta\geq \frac12$ in the last inequality. This proves the claim for $C^1$-functions.

\smallskip

{\bf Step 2} Conclusion. Consider $u_1,\dots,u_N\in W^{1,p}(B_R)^m$. By standard density results, we find $(u_{i,j})_j\subset C^\infty(B_R)^m$ such that $u_{i,j}\to u_i$ in $W^{1,p}(B_R)^m$. By Step~1, we find for every $j\in\mathbb N$ a cut-off function $\eta_j\in W_0^{1,\infty}(B_R)$ satisfying 
\begin{align}\label{L:optim:eta1}
&0\leq \eta_j\leq 1,\quad \eta_j=1\quad\mbox{in $B_{(1-\delta)R}$},\quad\|\nabla \eta_j\|_{L^\infty(B_R)}\leq \frac{2}{\delta R},\\
&\|\nabla \eta_j \otimes u_{i,j}\|_{L^q(B_R)}\leq\frac{CR^{\frac{d}{q}-\frac{d}{p}-1}}{\delta ^{1+\frac{1}{p}-\frac{1}{q}}}\left(R \biggl(\int_{B_R\setminus B_{(1-\delta)R}}|\D u_{i,j}|^p\dx\biggr)^\frac1p+\biggl(\int_{B_R\setminus B_{(1-\delta) R}}| u_{i,j}|^p\dx\biggr)^\frac1p\right).\label{L:optim:claim1}
\end{align}
In view of the bounds in \eqref{L:optim:eta1} and the Banach-Alaoglu Theorem, there exists $\eta\in W_0^{1,\infty}(B_R)$ such that up to subsequences (not relabeled) $\eta_j\stackrel{\star}{\rightharpoonup}\eta$ in $W^{1,\infty}(B_R)$. Moreover, $\eta$ also satisfies the bounds in \eqref{L:optim:eta}. Since $\nabla \eta_j\stackrel{\star}\rightharpoonup \nabla \eta$ weakly$^*$ in $L^\infty(B_R)^d$ and $u_{i,j}\to u_i$ (strongly) in $L^p(B_R)^m$, we deduce that $\nabla \eta_j\otimes u_{i,j}$ converges weakly in $L^p(B_R)^{m\times d}$ to $\nabla \eta\otimes u_i$ and by the boundedness of the right-hand side in \eqref{L:optim:claim1} also weakly in $L^{q}(B_R)^{m\times d}$ (weakly$^*$ if $q=+\infty$). Hence the claimed estimate \eqref{L:optim:claim} follows from \eqref{L:optim:claim1} and the weak or weak$^*$ lower-semicontinuity of the norm.

\end{proof}

\subsection{Approximation results}
In this section we show the approximation result that essentially will prove our main theorems. It heavily relies on Lemma~\ref{L:optim}. In Lemma~\ref{l.compactsupport} below we show the approximation claim for $C^1$-boundary conditions as in Theorem \ref{thm:2}. In the proof of Theorem \ref{thm:main} we will apply this lemma for zero boundary conditions. We start with the following elementary lemma. 
\begin{lemma}\label{l.interiorpoint}
	Let $J:\Omega\times\R^{m\times d}\to [0,+\infty]$ be jointly measurable and convex in the second variable. Assume that there exists $\theta>0$ such that $\sup_{|\xi|\leq\theta }J(x,\xi))\in L^1(\Omega)$. Let $g\in W^{1,p}(\Omega )^m$ be such that $\int_\Omega J(x,s\D g)\dx<+\infty$ for some $s>1$. Then there exists $r>0$ such that $\sup_{|\xi|\leq r}J(\cdot,\D g+\xi)\in L^1(\Omega)$. In particular, the map $\xi\mapsto J(x,\xi)$ is continuous at $\xi=\D g(x)$ for a.e. $x\in \Omega$.
\end{lemma}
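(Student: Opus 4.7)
The plan is to exploit convexity of $W(x,\cdot)$ to write $\D g(x)+\xi$ as a convex combination of $s\D g(x)$ and a vector in $\overline{B_\theta}$, and then to apply the two integrability assumptions termwise.

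First, I will fix the admissible radius. Since $s>1$, set $\lambda:=\frac{1}{s}\in(0,1)$ and choose
\begin{equation*}
	r:=\theta\,\frac{s-1}{s}>0.
\end{equation*}
For any $\xi\in\R^{m\times d}$ with $|\xi|\leq r$, define $\eta:=\frac{s}{s-1}\xi$, so that $|\eta|\leq\theta$ and the identity
\begin{equation*}
	\D g(x)+\xi=\lambda\,(s\D g(x))+(1-\lambda)\,\eta
\end{equation*}
holds pointwise. Convexity of $W(x,\cdot)$ then yields, for a.e.\ $x\in\Omega$ and every $|\xi|\leq r$,
\begin{equation*}
	W(x,\D g(x)+\xi)\leq \tfrac{1}{s}W(x,s\D g(x))+\bigl(1-\tfrac{1}{s}\bigr)\sup_{|\zeta|\leq\theta}W(x,\zeta).
\end{equation*}
Taking the supremum over $|\xi|\leq r$ on the left-hand side gives an $x$-pointwise bound by the sum of two non-negative measurable functions, both of which lie in $L^1(\Omega)$ by the assumption on $s\D g$ and by the hypothesis $\sup_{|\zeta|\leq\theta}W(\cdot,\zeta)\in L^1(\Omega)$. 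Hence $\sup_{|\xi|\leq r}W(\cdot,\D g+\xi)\in L^1(\Omega)$, which is the first claim.

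For the continuity statement, I will argue pointwise. The integrability just established implies that for a.e.\ $x\in\Omega$ one has $W(x,\D g(x)+\xi)<+\infty$ for every $\xi$ with $|\xi|\leq r$; in other words, $\D g(x)$ lies in the interior of the effective domain of the convex function $W(x,\cdot)$. I then invoke the standard fact that a convex function on $\R^{m\times d}$ with values in $[0,+\infty]$ is continuous on the interior of its effective domain, which gives continuity of $\xi\mapsto W(x,\xi)$ at $\xi=\D g(x)$.

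There is no real obstacle here; the only point deserving attention is the choice of the interpolation parameter $\lambda=1/s$, which is the unique choice making the coefficient of $\D g$ on the right-hand side of the convex combination equal to $1$, so that the residual vector $\eta$ is fully controlled by $|\xi|$ and the excess $s-1$. Everything else is a direct application of convexity and of the two integrability hypotheses.
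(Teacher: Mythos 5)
Your proof is correct and follows essentially the same route as the paper: the same convex interpolation $\D g+\xi=\tfrac{1}{s}(s\D g)+\tfrac{s-1}{s}\bigl(\tfrac{s}{s-1}\xi\bigr)$ with $r=\tfrac{s-1}{s}\theta$, termwise integrability, and the standard continuity of convex functions on the interior of their effective domain. No issues.
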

\begin{proof}
	Let $\xi\in\R^{m\times d}$ be such that $|\xi|\leq \frac{s-1} {s}\theta$, where $\theta>0$ and $s>1$ are given by the assumptions. It follows from convexity that
	\begin{equation*}
		J(x,\D g+\xi)\leq \frac{1}{s}J(x,s\D g)+\frac{s-1}{s}J\left(x,\frac{s}{s-1}\xi\right)\leq J(x,s\D g)+\sup_{|\eta|\leq\theta}J(x,\eta).
	\end{equation*}
	As $J$ is non-negative, this implies the claimed integrability provided that $r\leq \frac{s-1}{s}\theta$. The second statement follows from the continuity of convex functions on the interior of their domain (see, e.g., \cite[Theorem 4.36]{FoLe}).
\end{proof}
\begin{lemma}\label{l.compactsupport}
Let $J:\Omega\times\R^{m\times d}\to [0,+\infty]$ satisfy Assumption \ref{ass1} and let $g\in C_c^1(\R^d)^m$. If for some $s>1$ it holds that $\int_\Omega J(x,s\D g)\dx<+\infty$, then for any $u\in g+W^{1,p}_0(\Omega)^m$ there exists a sequence $u_n\in g+C_c^{\infty}(\Omega)^m$ such that $u_n\to u$ strongly in $W^{1,p}(\Omega)^m$ and
\begin{equation*}
	\lim_{n\to +\infty}\int_\Omega J(x,\D u_n(x))\dx=\int_\Omega J(x,\D u(x))\dx.
\end{equation*}
\end{lemma}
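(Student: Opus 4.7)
The plan is to approximate $v := u - g \in W_0^{1,p}(\Omega)^m$ at a single scale $\delta>0$ by a smooth, compactly supported $v_\delta \in C_c^\infty(\Omega)^m$ such that $v_\delta\to v$ in $W^{1,p}(\Omega)^m$ and $\int_\Omega W(x,\D g+\D v_\delta)\dx\to\int_\Omega W(x,\D u)\dx$; then $u_n:=g+v_{1/n}\in g+C_c^\infty(\Omega)^m$ is the desired sequence. Since $g\in C_c^1(\R^d)^m$, I will extend $v$ by zero outside $\Omega$. The construction I have in mind combines an inward shift of $v$ at scale $\delta$ (which pushes its support strictly inside $\Omega$) with a mollification at the \emph{same} scale $c\delta$ for a small absolute constant $c>0$, patched together by a partition of unity adapted to the Lipschitz geometry of $\partial\Omega$ and, crucially, to the local $L^p$-energy of $\D v$ via Lemma~\ref{L:optim}.

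Concretely, using the Lipschitz regularity of $\partial\Omega$ I would fix a finite open cover $U_0,\dots,U_N$ of $\overline\Omega$ with $U_0\Subset\Omega$, together with inward directions $\xi_1,\dots,\xi_N\in\R^d$ and $t_0>0$ such that $(U_j\cap\overline\Omega)+t\,\xi_j\subset\Omega$ for all $t\in(0,t_0]$ and $j\geq 1$; set $\xi_0:=0$. Pick a smooth partition of unity $\{\phi_j\}_{j=0}^N$ subordinate to this cover. For $\delta$ small, set $v_\delta^j:=\rho_{c\delta}\ast(\tau_{\delta\xi_j}v)$, where $\tau_h v(x):=v(x-h)$ and $\rho_{c\delta}$ is a standard mollifier; by the Lipschitz cone property, $\tau_{\delta\xi_j}v$ vanishes in a neighborhood of $\partial\Omega\cap U_j$, so for $c$ small enough each $v_\delta^j$ is smooth and supported strictly inside $\Omega$ on $\mathrm{supp}\,\phi_j$. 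Finally define $v_\delta:=\sum_j\phi_j\,v_\delta^j\in C_c^\infty(\Omega)^m$; classical arguments give $v_\delta\to v$ strongly in $W^{1,p}(\Omega)^m$. The gradient splits as
\[
\D v_\delta \;=\; \sum_{j=0}^N\phi_j\,\D v_\delta^j \;+\; \sum_{j=1}^N\nabla\phi_j\otimes(v_\delta^j-v_\delta^0),
\]
and the second, ``partition-of-unity defect'' sum is the main source of potential blow-up in the $W$-energy. The key step is to replace the smooth $\phi_j$ by cut-offs from Lemma~\ref{L:optim}, selected on balls of radius $\sim\delta$ so that their gradients concentrate in radial shells where $\|\D v\|_{L^p(\mathrm{shell})}^p+\|v\|_{L^p(\mathrm{shell})}^p$ is essentially minimal.

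For the energy convergence, the upper bound (a3) together with Lemma~\ref{L:optim} will control the defect term in the $q$-norm: the sharp exponent in Lemma~\ref{L:optim} is precisely what cancels the $\delta^{-(1+1/p-1/q)}$ factor when $q\leq(d-1)p/(d-1-p)$, so that absolute continuity of the $L^p$-integral sends the defect contribution to zero as the shells collapse onto $\partial\Omega$. For the principal term, convexity of $W(x,\cdot)$ yields the pointwise inequality
\[
W\Bigl(x,\D g+\sum_{j=0}^N\phi_j\D v_\delta^j\Bigr)\;\leq\;\sum_{j=0}^N\phi_j\,W(x,\D g+\D v_\delta^j),
\]
and each $W(x,\D g(x)+(\rho_{c\delta}\ast\tau_{\delta\xi_j}\D v)(x))$ can then be bounded via the stability condition~\ref{(a4)} at scale $\delta$ by $C_M$ times an essential infimum of $W(y,\cdot)$ over a ball of radius $O(\delta)$ around $x-\delta\xi_j$, plus an $L^1$-remainder $\alpha_M$, on the set where the argument is $\leq M\delta^{-d/p}$ in norm; the complementary set has vanishing measure by Chebyshev's inequality and the lower bound (a2). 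Passing $\delta\to 0$ via Lebesgue differentiation and dominated convergence, using Lemma~\ref{l.interiorpoint} to ensure continuity of $W(x,\cdot)$ at $\D g(x)$, will finish the argument. The hardest part I expect will be tuning the four scales simultaneously --- the mollification radius $c\delta$, the inward shift $\delta\xi_j$, the cut-off thickness from Lemma~\ref{L:optim}, and the threshold $M\delta^{-d/p}$ in~\ref{(a4)} --- so that the interplay between the non-autonomous integrand and the boundary correction is fully absorbed in the error, especially in the unbounded regime of Theorem~\ref{thm:2} where no $\Delta_2$ condition is available.
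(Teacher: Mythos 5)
Your overall architecture (pull the boundary datum inward, mollify at the \emph{same} scale, patch with a partition of unity adapted via Lemma~\ref{L:optim}, then use convexity and \ref{(a4)}) is the same as the paper's, which uses inward dilations $T_{k,i}$ instead of your translations $\tau_{\delta\xi_j}$ --- an inessential difference. However, two steps of your plan do not close as described.

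First, the partition-of-unity defect. You propose to take the cut-offs of Lemma~\ref{L:optim} on balls of radius $R\sim\delta$ collapsing onto $\partial\Omega$ and to conclude by ``absolute continuity of the $L^p$-integral.'' But the prefactor in \eqref{L:optim:claim} scales like $R^{d/q-d/p-1}$ with $d/q-d/p-1<0$, so shrinking the balls makes the bound blow up, and absolute continuity supplies no rate to compensate; there is no cancellation of the $\delta^{-(1+1/p-1/q)}$ factor of the kind you invoke. The paper instead keeps the cover \emph{fixed} and applies Lemma~\ref{L:optim} to the family of \emph{differences} $v_{k,j}-v$, which tend to $0$ in $W^{1,p}$; the smallness of $\nabla\eta_{k,i}\otimes(v_{k,j}-v)$ in $L^q$ then comes from this strong convergence, not from shrinking shells. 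Moreover, the defect cannot simply be dropped from the Jensen step: it must be absorbed as a separate slot in a convex combination with weight $(1-t)$, inside which it is multiplied by $\tfrac{2t}{1-t}$. Making that finite forces the preliminary reduction (via $u_t=g+t(u-g)$ and Fatou) to data with $\int_\Omega W(x,\D g+s_0\D v)\dx<+\infty$ for some $s_0>1$, which also yields the continuity of $W(x,\cdot)$ at $\D u(x)$ (not merely at $\D g(x)$) that your final limit requires. You omit this reduction entirely.

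Second, your treatment of the set where the mollified gradient exceeds $M\delta^{-d/p}$ by ``Chebyshev, hence vanishing measure'' is fatal in the regime $p>d-1$ with unbounded $W$: on a set of small but positive measure where \ref{(a4)} does not apply you have \emph{no} upper bound on $W$ whatsoever, so a vanishing measure gives nothing. The paper avoids any exceptional set: because $v_k$ vanishes on a layer of width $4\e_k$ around $\partial\Omega$ and the mollification radius is $\e_k$, the convolution estimate gives the \emph{pointwise everywhere} bound \eqref{eq:uniformbound}, $\|\phi_k\star\sum_i\varphi_{k,i}(\D v_{k,i}+\D g_{k,i})\|_{L^\infty}\leq C\e_k^{-d/p}$, so \ref{(a4)} applies at every point with a fixed $M$. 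Relatedly, to ``interchange $W$ and the convolution'' you need Jensen for the biconjugate $\omega_k$ applied to the \emph{whole} argument, which is why the paper transports $g$ as well ($\D g_{k,i}$ sits inside the convolution); and the final passage to the limit is not plain dominated convergence but Vitali's theorem, with equi-integrability furnished by the change of variables $z=T_{k,i}^{-1}y$ producing the majorant $C\dashint_{B_{C\e_k}(x)}W(z,\D u(z))\,\mathrm{d}z$, which converges in $L^1$. These pieces need to be supplied for the proof to stand.
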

\begin{proof}
	We divide the proof into several steps.
	\\
	{\bf Step 1:} Preliminary considerations
	
	It suffices to consider the case when $\int_\Omega J(x,\D u)\dx<+\infty$ since otherwise the statement follows from Fatou's lemma (recall that $J$ is lower semicontinuous in the second variable) and the density of $C_c^{\infty}(\Omega)^m$ in $W^{1,p}_0(\Omega)^m$. Moreover, writing $u=g+v$ with $v\in W_0^{1,p}(\Omega)^m$, it follows that $u_t:=g+tv$ converges to $u$ in $W^{1,p}(\Omega)^m$ when $t\uparrow 1$ and moreover
	\begin{equation*}
		\int_\Omega J(x,\D g+t\D v)\dx\leq t\int_\Omega J(x,\D g+\D v)\dx+(1-t)\int_\Omega J(x,\D g)\dx\overset{t\uparrow 1}{\to}\int_\Omega J(x,\D u)\dx,
	\end{equation*}
	so again due to Fatou's lemma and a diagonal argument it suffices to show the claim for functions $u=g+v\in g+ W^{1,p}_0(\Omega)^m$ such that $\int_\Omega J(x,\D g+s_0\D v)\dx<+\infty$ for some $s_0>1$. As we explain now, this entails that $J(x,\cdot)$ is continuous in $\D g(x)+\D v(x)$ for a.e. $x\in \Omega$. To this end, we show that $\D g(x)+\D v(x)$ belongs to the interior of $\text{dom}(J(x,\cdot))$. Given $s_0$ as above, let $\delta>0$ be such that $1-\frac{1}{s_0}>\delta$ and let $\xi\in\R^{m\times d}$ be such that $|\xi|\leq\delta r$, where $r>0$ is given by Lemma \ref{l.interiorpoint}. Then by convexity we have that
	\begin{equation*}
		J(x,\D g+\D v+\xi)\leq \frac{1}{s_0}J(x,\D g+s_0\D v)+\left(1-\frac{1}{s_0}-\delta\right)J(x,\D g)+\delta J(x,\D g+\delta^{-1}\xi)<+\infty
	\end{equation*}  
	for a.e. $x\in \Omega$, where we used Lemma \ref{l.interiorpoint} to control the third function on the RHS. The continuity of $\xi\mapsto J(x,\xi)$ in $\xi=\D g(x)+\D v(x)$ follows from the continuity of convex functions on the interior of their domain.
	\\
	{\bf Step 2:} Moving the support of $u-g$ inside $\Omega$
	
	Now fix $u\in g+W_0^{1,p}(\Omega)^m$ with the properties above. First extend $v=u-g$ to be $0$ outside $\Omega$, so that $v\in W^{1,p}(\R^d)^m$. For every $x\in \Omega$ we consider a ball $B_{r_x}(x)\subset\subset \Omega$, while for $x\in\partial \Omega$ the Lipschitz regularity of $\partial \Omega$ implies that (up to an Euclidean motion) there exists a cylinder $C_{x}= B^{d-1}_{r'_x}(0)\times (-h_x,h_x)$ with $x\in C_x$ and 
	\begin{equation}\label{eq:boundarydescription}
		\Omega\cap C_{x}=\{(y',y_d)\in C_x:\,y_d<\psi_{x}(y')\}
	\end{equation}
	for some Lipschitz-function $\psi_{x}:B^{d-1}_{r'_x}(0)\to(-h_x,h_x)$. Up to reducing $ r'_{x}$, we may assume that 
	\begin{equation}\label{eq:nottouching}
		\psi_x(B^{d-1}_{r'_x}(0))\subset\subset (-h_x,h_x).
	\end{equation}
	Choose then $r_x<\min\{r'_x,h_x\}$ such that $B_{r_x}(x)\subset\subset C_x$ and such that the Lipschitz-constant $L_x$ of $\psi_x$ satisfies
	\begin{equation}\label{eq:rx_small}
		0<2L_xr_x\leq h_x+\inf_{|y'|\leq r'_x}\psi_x(y'),
	\end{equation}
	which is possible due to \eqref{eq:nottouching}. Due to the compactness of $\overline{\Omega}$, we find a finite family of above balls $B_i=B_{r_{x_i}}(x_i)$ ($1\leq i\leq N$) that cover $\overline{\Omega}$. These balls will be fixed throughout the rest of the proof, so we omit the dependence on the radii $r_{x_i}$ or the number $N$ of certain quantities. For interior points $x_i\in \Omega$, we define $z_i=x_i$, while for points $x_i\in\partial \Omega$ we choose $z_i\in\R^d$ such that in the local coordinates we have $z_i=(0,-h_{x_i})$ (i.e., at the bottom of the local graph representation). Now let $0<\rho_k<1$ be such that $\lim_k\rho_k=1$ and for any $1\leq i\leq N$ we define the affine map $T_{k,i}$ and its inverse $T_{k,i}^{-1}$ by
	\begin{equation}\label{eq:defT_ki}
		T_{k,i}x=z_i+\rho_k(x-z_i),\qquad T_{k,i}^{-1}y=z_i+\frac{1}{\rho_k}(y-z_i),
	\end{equation}
where the purpose of the latter is to move the support of $v$ locally inside $\Omega$.
	Having this in mind, we define the functions
	\begin{equation*}
		v_{k,i}=\rho_kv\circ T_{k,i}^{-1},\qquad g_{k,i}=\rho_k g\circ T_{k,i}^{-1}.
	\end{equation*}
	Note that both functions are well-defined since $v$ and $g$ are defined on $\R^d$.
	Since $\rho_k\to 1$ and $g\in C_c^1(\R^d)^m$, via a density argument with respect to $v$ one shows that for all $1\leq i\leq N$ it holds that 
	\begin{equation}\label{eq:conv_ki}
		v_{k,i}\to v \text{ in }W^{1,p}(\R^d)^m,\qquad g_{k,i}\to g \text{ in }W^{1,\infty}(\R^d)^m\text{ as }k\to +\infty.
	\end{equation}
	Next, let $(\theta_{i})_{i=0}^N$ be a smooth partition of unity subordinated to the cover $\{\R^d\setminus\overline{\Omega},(B_{i})_{i=1}^N\}$ of $\R^d$ (note that we work on the 'manifold' $\R^d$ and therefore ${\rm supp}(\theta_{i})$ is compactly contained in $B_{i}$ and $\theta_{0}$ vanishes on a neighborhood of $\overline{\Omega}$). We build an ad hoc Lipschitz partition of unity as follows: choose $\delta_0>0$ such that for each $1\leq i\leq N$ we have ${\rm supp}(\theta_i)\subset\subset B_{(1-\delta_0)r_{x_i}}(x_i)$ and then use Lemma \ref{L:optim} (with $\delta=\delta_0$ that is fixed for the rest of the proof) for the finite family of functions $\{v_{k,j}-v\}_{j=1}^N\subset W^{1,p}(B_i)^m$ to obtain $\eta_{k,i}\in W_0^{1,\infty}(B_i)$ such that $0\leq\eta_{k,i}\leq 1$,
	\begin{equation*}
		\eta_{k,i}=1\quad\text{ on }{\rm supp}(\theta_i),\qquad\qquad \|\nabla\eta_{k,i}\|_{L^{\infty}(B_i)}\leq C
	\end{equation*}
	and for all $1\leq j\leq N$
	\begin{equation}\label{eq:productbound}
		\|\nabla\eta_{k,i}\otimes (v_{k,j}-v)\|_{L^{q}(B_i)}\leq C\|v_{k,j}-v\|_{W^{1,p}(B_i)}\overset{k\to +\infty}{\longrightarrow}0\qquad (q=\infty\text{ if }p>d-1).
	\end{equation}
	We define the Lipschitz partition of unity as follows: for $k\in\N$ we set
	\begin{equation*} \varphi_{k,0}=\frac{\theta_0}{\theta_0+\sum_{j=1}^N\eta_{k,j}},\quad \varphi_{k,i}=\frac{\eta_{k,i}}{\theta_0+\sum_{j=1}^N\eta_{k,j}}.
	\end{equation*}
	Here the denominator is always $\geq 1$ since $\eta_{k,j}=1$ on ${\rm supp}(\theta_j)$. Therefore the gradient of $\varphi_{k,i}$ ($1\leq i\leq N$) satisfies for any vector $a\in\R^m$ the pointwise estimate
	\begin{equation}\label{eq:grad_control}
		|\nabla\varphi_{k,i}\otimes a|\leq\frac{\left|\nabla\eta_{k,i}\otimes a\right|}{\theta_0+\sum_{j=1}^N\eta_{k,j}}+\frac{\eta_{k,i}\left|\nabla\theta_0\otimes a+\sum_{j=1}^N\nabla\eta_{k,j} \otimes a\right|}{\left(\theta_0+\sum_{j=1}^N\eta_{k,j}\right)^2}\leq |\nabla\theta_0\otimes a|+2\sum_{j=1}^N|\nabla\eta_{k,j}(x)\otimes a|.
	\end{equation}
	Set
	\begin{equation*}
		v_{k}(x)=\sum_{i=1}^{N}\varphi_{k,i}(x)v_{k,i}(x).
	\end{equation*}
	As a next step we aim to regularize $v_k$ via convolution at the scale $|\frac{1}{\rho_k}-1|\ll 1$. In order to ensure that the regularized function has compact support, we need to quantitatively control the support of $v_k$: let $ d_k \ll 1$ and consider $x\in \Omega$ such that $\dist(x,\partial \Omega)\leq d_k $. We show that for a suitably small constant $c_0>0$ the choice $d_k =c_0|\frac{1}{\rho_k}-1|$ implies that $v_k(x)=0$. To this end, consider a ball $B_i$ with $x\in B_i$. We argue that $\varphi_{k,i}(x)v_{k,i}(x)=0$ for such $i$, which then implies that $v_k(x)=0$. Since there are only finitely many sets in the covering and 'interior' balls are compactly contained in $\Omega$, for $ d_k $ small enough we know from our construction of the covering and \eqref{eq:boundarydescription} that, up to a Euclidean motion, with the corresponding cylinders $C_i=C_{x_i}$ and radii $r_i'=r_{x_i}'$ we can write
	\begin{equation*}
		\Omega\cap C_i=\{(y',y_d)\in C_i:\,y_d<\psi_i(y')\}
	\end{equation*}
	for a Lipschitz function $\psi_i:B_{r'_{i}}^{d-1}(0)\to (-h_i,h_i)$. Since ${\rm supp}(\varphi_{k,i})={\rm supp}(\eta_{k,i})\subset \overline{B_i}\subset\subset C_i$, there exists $0<\theta<\min_i r_i$ such that $\varphi_{k,i}(x)=0$ whenever $\dist(x,\partial C_i)\leq\theta$. Hence we can assume that $\dist(x,\partial C_i)>\theta$. We will show that $z_i+\frac{1}{\rho_k}(x-z_i)\in C_i\setminus \overline{\Omega}$, which then implies that $v_{k,i}(x)=0$. Since $x\in C_i$ and $\dist(x,\partial C_i)>\theta$ and the $z_i$ are fixed, it follows that for $\rho_k$ sufficiently close to $1$ we have $z_i+\frac{1}{\rho_k}(x-z_i)\in C_i$. Hence, in order to show that this point does not belong to $\overline{\Omega}$, it suffices to show that in the local coordinates (where $z_i=(0,-h_i)$) we have 
	\begin{equation}\label{eq:notinD}
		\psi_i\left(\frac{1}{\rho_k}x'\right)< -h_i+\frac{1}{\rho_k}(x_d+h_i).  
	\end{equation}
	Let $d_x\in\partial \Omega$ be such that $|x-d_x|=\dist(x,\partial \Omega)$. Note that $d_x\in C_i$ (otherwise the line from $x$ to $d_x$ intersects $\partial C_i$ at a distance less than $ d_k \ll\theta$), so that we can write $d_x=(y',\psi_i(y'))$ for some $y'\in B_{r'_i}^{d-1}(0)$. To show \eqref{eq:notinD}, let $L_i$ be the Lipschitz constant of $\psi_i$. Then we can estimate
	\begin{align*}
		\psi_i\left(\frac{1}{\rho_k}x'\right)-\left(\frac{1}{\rho_k}-1\right)h_i-\frac{1}{\rho_k}x_d&\leq \underbrace{\psi_i\left(y'\right)-x_d}_{\leq |d_x-x|\leq d_k }+L_i\left|y'-\frac{1}{\rho_k}x'\right|-\left(\frac{1}{\rho_k}-1\right)h_i-\left(\frac{1}{\rho_k}-1\right)x_d
		\\
		&\leq  d_k  +L_i\left(\frac{1}{\rho_k}-1\right)|x'|+L_i\underbrace{|y'-x'|}_{\leq |d_x-x|\leq  d_k }-\left(\frac{1}{\rho_k}-1\right)(h_i+x_d)
		\\
		&\leq (L_i+1) d_k +\left(\frac{1}{\rho_k}-1\right)(L_i|x'|-h_i-x_d)
		\\
		&\leq (L_i+2) d_k +\left(\frac{1}{\rho_k}-1\right)(L_i|x'|-h_i-\psi(y'))
		\\
		&\leq (L_i+2) d_k +\frac{1}{2}\left(\frac{1}{\rho_k}-1\right)(\underbrace{-h_i-\inf_{|y'|\leq r'_i}\psi(y')}_{=:\kappa_i\overset{\eqref{eq:nottouching}}{<}0}),
	\end{align*}
	where we used \eqref{eq:rx_small} in the last estimate. Hence we find $c_0>$ such that the choice $ d_k  =c_0\left(\frac{1}{\rho_k}-1\right)$ turns the right-hand side negative for all $1\leq i\leq N$. We thus proved that 
	\begin{equation}\label{eq:definescale}
		v_k=0\qquad\text{ on }\dist(\cdot,\partial \Omega)\leq c_0 \left|\frac{1}{\rho_k}-1\right|=:4\e_k.
	\end{equation}
	{\bf Step 3: Convolution and energy estimates}
	
	Denoting by $\phi_k$ a family of standard mollifiers supported in $B_{\e_k}(0)$, \eqref{eq:definescale} implies that we have $\phi_k\star v_k\in C_c^{\infty}(\Omega)^m$. We next show that $v_k\to v$ in $W^{1,p}(\Omega)^m$, which then also implies that $\phi_k\star v_k\to v$ in $W^{1,p}(\Omega)^m$. Indeed, due to Young's convolution inequality (recall that $v$ was extended by $0$ to $\R^d$, so the expression below makes sense) we have that
	\begin{equation*}
		\|\phi_k\star v_k-v\|_{L^p(\R^d)}\leq \|\phi_k\star v_k-\phi_k\star v\|_{L^p(\R^d)}+\|\phi_k\star v-v\|_{L^p(\R^d)}\leq \|v_k-v\|_{L^p(\R^d)}+\|\phi_k\star v-v\|_{L^p(\R^d)}\to 0
	\end{equation*}
	and the same argument shows the convergence of the gradients. By convexity, we have that $v_k\to v$ in $L^p(\Omega)^m$ due to \eqref{eq:conv_ki}, while for the gradients we first observe that on $\Omega$ (where $\varphi_{k,0}$ vanishes) the gradient of $v_k$ can be expressed as
	\begin{equation*}
		\D v_k=\sum_{i=1}^N\varphi_{k,i}\D v_{k,i}+\sum_{i=1}^N\nabla\varphi_{k,i}\otimes(v_{k,i}-v).
	\end{equation*}
	Hence the estimate \eqref{eq:grad_control} and the uniform gradient bound for $\eta_{k,i}$ imply that
	\begin{align*}
		\int_{\Omega}|\D v_{k}-\D v|^p\dx&\leq c(p)\int_{\Omega}\left|\sum_{i=1}^N\varphi_{k,i}(\D v_{k,i}-\D v)\right|^p\dx+c(p)\int_{\Omega}\left|\sum_{i=1}^N\nabla \varphi_{k,i}\otimes(v_{k,i}-v)\right|^p\dx
		\\
		&\leq C\sum_{i=1}^N\int_{\Omega}|v_{k,i}-v|^p+|\D v_{k,i}-\D v|^p\dx\to 0\quad\text{ as $k\to+\infty$ }. 
	\end{align*} 
	Now we establish the energy estimate. Given $t\in (0,1)$, we consider the sequence $u_{k,t}:=g+t\phi_k\star v_k\in g+C_c^{\infty}(\Omega)^m$. Then $u_{k,t}\to g+tv$ in $W^{1,p}(\Omega)^m$ as $k\to +\infty$. We claim that
	\begin{equation}\label{eq:energyclaim}
		\limsup_{t\uparrow 1}\limsup_{k\to +\infty}\int_\Omega J(x,\D u_{k,t})\dx\leq \int_\Omega J(x,\D u)\dx.
	\end{equation}
	To this end, we write the gradient of $u_{k,t}$ as
	\begin{align*}
		\D u_{k,t}&=t\sum_{i=1}^N \phi_k\star (\varphi_{k,i}(\D v_{k,i}+\D g_{k,i}))
		\\
		&\quad+\frac{(1-t)}{2}\left(\D g+\frac{2t}{1-t}\sum_{i=1}^N\phi_k\star(\varphi_{k,i}(\D g-\D g_{k,i})+\nabla\varphi_{k,i}\otimes(v_{k,i}-v))\right)
		\\
		&\quad +\frac{(1-t)}{2}\left(\D g+\frac{2t}{1-t}\sum_{i=1}^N\left(\varphi_{k,i}\D g-\phi_k\star(\varphi_{k,i}\D g)\right)\right).
	\end{align*}
	Thus, using convexity of $J(x,\cdot)$, we have that
	\begin{align*}
		\int_\Omega J(x,\D u_{k,t})\dx\leq \int_\Omega tJ\left(x,\sum_{i=1}^N \phi_k\star (\varphi_{k,i}(\D v_{k,i}+\D g_{k,i}))\right)\dx+\frac{1-t}{2}\int_\Omega J^t_{k,1}(x)+J^t_{k,2}(x)\dx
	\end{align*}
	with the error terms
	\begin{align*}
		J^t_{k,1}(x)&:=J\left(x,\D g+\frac{2t}{1-t}\sum_{i=1}^N\phi_k\star(\varphi_{k,i}(\D g-\D g_{k,i})+\nabla\varphi_{k,i}\otimes(v_{k,i}-v))\right),
		\\
		J^t_{k,2}(x)&:=J\left(x,\D g+\frac{2t}{1-t}\sum_{i=1}^N\left(\varphi_{k,i}\D g-\phi_k\star(\varphi_{k,i}\D g)\right)\right).
	\end{align*}
	Let us first analyze the error terms, starting with $J_{k,1}^t$. Since $0\leq \varphi_{k,i}\leq 1$ and $\D g_{k,i}\to\D g$ uniformly on $\R^d$, the sequence $\phi_k\star(\varphi_{k,i}(\D g-\D g_{k,i}))$ converges uniformly to $0$ on $\Omega$. For the term $\phi_k\star (\nabla\varphi_{k,i}\otimes (v_{k,i}-v))$ we combine the bound \eqref{eq:grad_control} applied to $a=v_{k,i}-v$ and \eqref{eq:productbound} to deduce that $|\nabla\varphi_{k,i}\otimes (v_{k,i}-v)|$ converges to $0$ in $L^q(B_i)$ (with $q=\infty$ if $p>d-1$) and since ${\rm supp}(\varphi_{k,i})\subset\overline{B_i})$ also in $L^q(\R^d)$. The same convergence then holds for the sequence $\phi_k\star (\nabla\varphi_{k,i}\otimes (v_{k,i}-v))$. Hence, taking into account Assumption \ref{(a3)} we can either use the $q$-growth from above or Lemma \ref{l.interiorpoint} to apply the dominated convergence theorem and find that 
	\begin{equation*}
		\lim_{t\uparrow 1}\lim_{k\to +\infty}\frac{1-t}{2}\int_{\Omega} J^t_{k,1}(x)\dx=\lim_{t\uparrow 1}\frac{1-t}{2}\int_\Omega J(x,\D g)\dx=0.
	\end{equation*}
	For the integral involving $J_{k,2}^t$ the argument is similar: we first simplify the sum by noting that on $\Omega$ we have
	\begin{equation*}
		\sum_{i=1}^N\left(\phi_k\star(\varphi_{k,i}\D g)-\varphi_{k,i}\D g\right)=\phi_k\star \D g-\D g\to 0 \text{ uniformly on }\Omega.
	\end{equation*}
	Thus again Lemma \ref{l.interiorpoint} allows us to apply the dominated convergence theorem and we deduce that
	\begin{equation*}
		\lim_{t\uparrow 1}\lim_{k\to +\infty}\frac{1-t}{2}\int_\Omega J^t_{k,2}(x)\dx=\lim_{t\uparrow 1}\frac{1-t}{2}\int_\Omega J(x,\D g)\dx=0.
	\end{equation*}
	Thus, in order to show \eqref{eq:energyclaim}, it suffices to show that
	\begin{equation}\label{eq:energysupport}
		\lim_{k\to +\infty}\int_\Omega J\left(x,\sum_{i=1}^N \phi_k\star (\varphi_{k,i}(\D v_{k,i}+\D g_{k,i}))\right)\dx= \int_\Omega J(x,\D u)\dx.
	\end{equation}
	First, let us verify the pointwise convergence of the integrand. It holds that
	\begin{align*}
		\left\|\sum_{i=1}^N\varphi_{k,i}(\D v_{k,i}+\D g_{k,i})-\D u\right\|_{L^p(\Omega)}\leq \sum_{i=1}^N\|\D v_{k,i}-\D v\|_{L^p(\Omega)}+\|\D g_{k,i}-\D g\|_{L^p(\Omega)}\to 0,
	\end{align*}
	while outside of $\Omega$ the map $\D v_{k,i}$ vanishes. Since $\D g_{k,i}$ is uniformly bounded, we deduce that
	\begin{equation*}
		\sum_{i=1}^N \phi_k\star (\varphi_{k,i}(\D v_{k,i}+\D g_{k,i}))\to \D u\text{ in }L^p(\Omega)^{m\times d}.
	\end{equation*}
	Hence, in order to show \eqref{eq:energysupport}, due to Vitali's convergence theorem and the a.e. continuity of $J(x,\cdot)$ at $\D u(x)$ established at the beginning of the proof, it suffices to show that the sequence of integrands is bounded by an equi-integrable sequence of functions. It is here where we rely on the full strength of the stability estimate \eqref{eq:stability}. We use it to replace the $x$-variable in the first component of $J$ by $T_{k,i}^{-1}x$ (cf. \eqref{eq:defT_ki}) and to interchange the action of $J$ and the convolution. For technical reasons, we set $J(\cdot,\xi)\equiv 0$ on $\R^d\setminus \Omega$.	
	
	Using a standard estimate for convolutions and that $v_{k,i}$ has compact support in $\Omega$, for any $x\in\R^d$ we have that
	\begin{align*}
		\left|\left(\phi_k\star\sum_{i=1}^N\varphi_{k,i}(\D v_{k,i}+\D g_{k,i})\right)(x) \right|&\leq C\e_k^{-\frac{d}{p}}\left\|\sum_{i=1}^N\varphi_{k,i}(\D v_{k,i}+\D g_{k,i}) \right\|_{L^{p}(B_{\e_k}(x))}
		\\
		&\leq C\e_k^{-\frac{d}{p}}\sum_{i=1}^N\|\D v_{k,i}\|_{L^p(\Omega)}+\|\D g\|_{L^\infty(\R^d)}.
	\end{align*}
	Due to \eqref{eq:definescale}, we deduce that for a suitably large constant $C>0$ we have that
	\begin{equation}\label{eq:uniformbound}
		\left\|\phi_k\star\sum_{i=1}^N\varphi_{k,i}(\D v_{k,i}+\D g_{k,i}) \right\|_{L^{\infty}(\R^d)}\leq C\e_k^{-\frac{d}{p}}\leq C \left|\frac{1}{\rho_k}-1\right|^{-\frac{d}{p}}.
	\end{equation}
	We apply the stability estimate \eqref{eq:stability} for $x\in\Omega$ and the scale $\delta_k=2\text{ diam}(\Omega)\left|\frac{1}{\rho_k}-1\right|$. Due to \eqref{eq:definescale}, we may assume that $\e_k\leq\delta_k$. To simplify notation, set $\omega_{k}(x,\xi)=\left(\text{ess}\inf_{y\in \overline{B_{2\delta_k}(x)}\cap\Omega}J(y,\cdot)\right)^{**}(\xi)$. Then by \eqref{eq:stability}
	\begin{equation*}
		J\left(x,\phi_k\star\sum_{i=1}^N\varphi_{k,i}(\D v_{k,i}+\D g_{k,i})\right)\leq C\omega_k\left(x,\phi_k\star\sum_{i=1}^N\varphi_{k,i}(\D v_{k,i}+\D g_{k,i})\right)+\alpha(x).
	\end{equation*}
	Since $\alpha\in L^1(\Omega)$, we can continue by bounding the remaining RHS term by an equi-integrable sequence of  functions. Recall that $\omega_k$ is convex and lower semicontinuous in the $\xi$-variable, so that it respects Jensen's inequality. Therefore
	\begin{align}\label{eq:w_k_estimate}
		\omega_k\left(x,\phi_k\star\sum_{i=1}^N\varphi_{k,i}(\D v_{k,i}+\D g_{k,i})\right)&\leq\int_{B_{\e_k}(x)}\phi_k(x-y)\omega_k\left(x,\sum_{i=1}^N\varphi_{k,i}(y)(\D v_{k,i}(y)+\D g_{k,i}(y))\right)\dy\nonumber
		\\
		&\leq \sum_{i=1}^N\int_{B_{\e_k}(x)}\phi_k(x-y)\varphi_{k,i}(y)\omega_k(x,\D v_{k,i}(y)+\D g_{k,i}(y))\dy,
	\end{align}
	where the last inequality follows from convexity (recall that $\varphi_{k,0}$ vanishes on a neighborhood of $\overline{\Omega}$, so for $k$ large enough the above weights sum up to $1$ for all $x\in\Omega$). Now we need to distinguish two cases for $y\in B_{\e_k}(x)$: if $T_{k,i}^{-1}y\in\Omega$, then 
	\begin{equation*}
		|T_{k,i}^{-1}y-x|\leq |T_{k,i}^{-1}y-y|+|y-x|<\delta_k +\e_k\leq 2\delta_k,
	\end{equation*}
	so that the elementary inequality $f^{**}\leq f$ and the definition of $\omega_k$ yield that
	\begin{equation}\label{eq:case1}
		\omega_k(x,\D v_{k,i}(y)+\D g_{k,i}(y))\leq J(T_{k,i}^{-1}y,\D v_{k,i}(y)+\D g_{k,i}(y)).
	\end{equation}
	If instead $T_{k,i}^{-1}y\notin \Omega$, then it follows from the definition of $v_{k,i}$ that $\D v_{k,i}(y)=0$. To control the contribution coming from $\D g_{k,i}$, recall that $\D g_{k,i}\to \D g$ uniformly. Together with the uniform continuity of $\D g$, for $k$ large enough and $y\in B_{\e_k}(x)$ we find that 
\begin{align*}
	\w_k(x,\D g_{k,i}(y))\leq \sup_{|\xi|\leq r}\w_k(x,\D g(x)+\xi)\leq \sup_{|\xi|\leq r}J(x,\D g(x)+\xi),
\end{align*}
where $r>0$ is given by Lemma \ref{l.interiorpoint}. Combined with \eqref{eq:case1} we obtain the global bound
\begin{equation*}
	\w_k(x,\D v_{k,i}(y)+\D g_{k,i}(y))\leq J(T_{k,i}^{-1}y,\D v_{k,i}+\D g_{k,i})+\sup_{|\xi|\leq r}J(x,\D g(x)+\xi)
\end{equation*}
for all $x\in\Omega,\;y\in B_{\e_k}(x)$. Inserting this estimate into \eqref{eq:w_k_estimate}, we infer that
	\begin{align*}
		\omega_k\left(x,\phi_k\star\sum_{i=1}^N\varphi_{k,i}(\D v_{k,i}+\D g_{k,i})\right)&\leq \sum_{i=1}^N\int_{B_{\e_k}(x)}\phi_k(x-y)\varphi_{k,i}(y)J(T_{k,i}^{-1}y,\D v_{k,i}+\D g_{k,i})\dy
		\\
		&\quad +\sup_{|\xi|\leq r}J(x,\D g(x)+\xi).
	\end{align*}
	The last RHS function is integrable by Lemma \ref{l.interiorpoint}, so it suffices to show that for each $i\in\{1,\ldots,N\}$ the sequences in the RHS sum are equi-integrable. By the change of variables $T_{k,i}^{-1}y=z$ and the scaling of the mollifier, we obtain that 
	\begin{align*}
		&\quad \int_{B_{\e_k}(x)}\phi_k(x-y)\varphi_{k,i}(y)J\left(T_{k,i}^{-1}y,\D v_{k,i}(y)+\D g_{k,i}(y)\right)\dy
		\\
		&\leq\int_{T_{k,i}^{-1}B_{\e_k}(x)}\phi_k(x-T_{k,i}z)J\left(z,\D v(z)+\D g(z)\right)\,\mathrm{d}z
		\leq C\dashint_{B_{C\e_k}(x)}J(z,\D u(z))\,\mathrm{d}z
		\\
		&=C\frac{1}{(C\e_k)^d|B_1(0)|}\mathds{1}_{B_1(0)}(\cdot/C\e_k)\star J(\cdot,Du(\cdot))(x).
	\end{align*}
	By the properties of approximate identities (cf. \cite[Theorem 8.14]{Fo}), the last sequence converges strongly in $L^1(\Omega)$ (and thus is equi-integrable) since $x\mapsto J(x,\D u(x))\in L^1(\R^d)$ due to the trivial extension by $0$ outside of $\Omega$. We thus conclude the proof.	
\end{proof}

We now prove the first main result for exponents $p\leq d-1$.
\begin{proof}[Proof of Theorem \ref{thm:main}]
	For $u\in W^{1,p}(\Omega)^m$ set $F(u)=\int_\Omega J(x,\D u)\dx$. We show that for any $u\in g+W^{1,p}_0(\Omega)^m$ there exists a sequence $u_n\in g+C_c^{\infty}(\Omega)^m$ such that $u_n\to u$ in $W^{1,p}(\Omega)^m$ and $F(u_n)\to F(u)$. We first reduce the analysis to a simpler situation. Arguing as at the beginning of the proof of Lemma \ref{l.compactsupport}, we may assume without loss of generality that $F(u)<+\infty$. Let $t\in (0,1)$. Then $u_t:=g+t(u-g)\in g+W^{1,p}_0(\Omega)^m$ and by convexity we have 
	\begin{align*}
		F\left(\frac{(1+t)}{2t}(u_t-g)\right)&=F\left(\frac{(1+t)}{2}(u-g)\right)=F\left(\frac{(1+t)}{2}u+\frac{(1-t)}{2}\frac{(1+t)}{(t-1)}g\right)
		\\
		&\leq \frac{(1+t)}{2}F(u)+\frac{(1-t)}{2}F\left(\frac{(1+t)}{(t-1)}g\right)<+\infty,
	\end{align*}
	where we use for the last inequality that by assumption $F(sg)<+\infty$ for all $s\in\R$. Moreover, we have $F(u_t)\leq (1-t)F(g)+tF(u)<+\infty$ and 
	\begin{equation*}
		\lim_{t\uparrow 1}F(u_t)\leq F(u)
	\end{equation*}
	and since $u_t\to u$ in $W^{1,p}(\Omega)^m$ when $t\uparrow 1$ and $\tfrac{1+t}{2t}>1$, a diagonal argument allows us to show the approximation for functions $u$ such that additionally $F(s(u-g))<+\infty$ for some $s>1$. We next apply Lemma \ref{l.compactsupport} with zero boundary datum to find a sequence $v_{n}\in C_c^{\infty}(\Omega)^m$ such that $v_n\to u-g$ in $W^{1,p}(\Omega)^m$ and
	\begin{equation}\label{eq:zerobc}
		\lim_{n\to +\infty}\int_\Omega J(x,s\D v_n)\dx=\int_\Omega J(x,s\D (u-g))\dx<+\infty.
	\end{equation}
	Up to a subsequence (not relabeled), we can also assume $v_n\to u-g$ and $\D v_n\to \D u-\D g$ a.e. in $\Omega$. We show that
	$$
	\lim_{n\to +\infty}\int_\Omega J(x,\D g+\D v_n)\dx=\int_\Omega J(x,\D u)\dx,
	$$
	which proves the assertion. By Fatou's lemma and the non-negativity and lower semicontinuity of $J$, we have
	$$\liminf_{n\to\infty}\int_\Omega J(x,\D g+\D v_n)\dx\geq\int_\Omega J(x,\D u)\dx.$$
	To show the corresponding inequality for the $\limsup$, we first observe that for all $\xi\in\R^{m\times d}$
	\begin{equation}\label{est:tildeWforfatou}
		J(x,\D g+\xi)=J(x,(1-\tfrac{1}{s})\tfrac{s}{s-1}\D g+\tfrac{1}{s}s\xi)\leq J(x,\tfrac{s}{s-1}\D g)+J(x,s\xi).
	\end{equation}
	Hence, the desired inequality follows with help of estimate \eqref{est:tildeWforfatou}, Fatou's lemma and \eqref{eq:zerobc}:
	\begin{align*}
		\limsup_{n\to\infty}\int_\Omega J(x,\D g+\D v_n)\dx\leq&-\liminf_{n\to\infty}\int_\Omega J(x,\tfrac{s}{s-1}\D g)+J(x,s\D v_n)-J(x,\D g+\D v_n)\dx\\
		&+\lim_{n\to\infty}\int_\Omega J(x,\tfrac{s}{s-1}\D g)+J(x,s\D v_n)\dx\\
		\leq&\int_\Omega J(x,\D u)\dx.
	\end{align*}
	Note that here we used that also $\xi\mapsto -J(x,\xi)$ is lower semicontinuous due to the fact that $J(x,\cdot)$ is even continuous, being convex and finite-valued.
\end{proof}
Next, we prove our results on unbounded and locally bounded integrands.
\begin{proof}[Proof of Theorem \ref{thm:2}]
	The result is contained in the statement of Lemma \ref{l.compactsupport}.
\end{proof}
\begin{proof}[Proof of Corollary \ref{c.locallybounded}]
	We can apply Lemma \ref{l.compactsupport} with $g=0$ and then repeat the construction used in the proof of Theorem \ref{thm:main}, where no additional growth assumptions were used except the integrability of $x\mapsto J(x,s\D g(x))$ for every $s\in\R$. This holds for $g\in W^{1,\infty}(\Omega)^m$ since \eqref{eq:stability} implies that $J(\cdot,\xi)\in L^1(\Omega)$ for all $\xi\in\R^{m\times d}$, which combined with convexity yields that $x\mapsto \sup_{|\xi|\leq R}J(x,\xi)\in L^1(\Omega)$ for all $R>0$. For the claimed integrability of $J(\cdot,\xi)$, note that for $\delta\ll 1$ (but fixed) we have
	\begin{equation*}
		J(x,\xi)\leq C\;\text{ess}\inf_{y\in \overline{B_{\delta}(x)}\cap\Omega}J(y,\xi)+\alpha(x)
	\end{equation*}
	for some $\alpha\in L^1(\Omega)$. We prove that the first RHS function is bounded. To this end, assume by contradiction that there exists a sequence $(x_k)_{k\in\N}\subset\Omega$ such that 
	\begin{equation*}
		\text{ess}\inf_{y\in \overline{B_{\delta}(x_k)}\cap\Omega}J(y,\xi)\geq k\qquad\text{ for all }k\in\N.
	\end{equation*}
	Passing to a subsequence (not relabeled), we can assume that $x_k\to x_0$ for some $x_0\in\overline{\Omega}$. But then 
	\begin{equation*}
		\text{ess}\inf_{y\in \overline{B_{\delta/2}(x_0)}\cap\Omega}J(y,\xi)\geq \lim_{ k \to +\infty}\text{ess}\inf_{y\in \overline{B_{\delta}(x_k)}\cap\Omega}J(y,\xi)=+\infty,
	\end{equation*}
	which contradicts the fact that $J$ is finite-valued. 
\end{proof}
We finally treat the non-convex case, where the argument is essentially taken from \cite[Chapter X, Proposition 2.10]{EkTe}. 

\begin{proof}[Proof of Corollary \ref{c.nonconvex}]
Let $u_n\in g+C_c^{\infty}(\Omega)^m$ be the sequence given by either Theorem \ref{thm:main}, Theorem \ref{thm:2} or Corollary \ref{c.locallybounded}. Thus $u_n\to u$ in $W^{1,p}(\Omega)^m$ and $\int_{\Omega}J(x,\D u_n)\dx\to \int_{\Omega}J(x,\D u)\dx$ and after extracting a subsequence (not relabeled) we have $\D u_n\to \D u$ a.e.\ in $\Omega$. In view of Fatou's lemma, it suffices to show
\begin{equation*}
	\limsup_{n\to +\infty}\int_{\Omega}G(x,\D u_n)\dx\leq \int_{\Omega}G(x,\D u)\dx.
\end{equation*}
It is not restrictive to assume that $\int_{\Omega}G(x,\D u)\dx<+\infty$. In this case also $\int_{\Omega}J(x,\D u)\dx<+\infty$. Applying Fatou's lemma to the non-negative integrand $-G(x,\xi)+C J(x,\xi)+\alpha(x)$ we infer that
\begin{align*}
	\int_{\Omega}-G(x,\D u)+CJ(x,\D u)+\alpha(x)\dx&\leq \liminf_{n\to +\infty}\left(\int_{\Omega}-G(x,\D u_n)+CJ(x,\D u_n)+\alpha(x)\dx\right)
	\\
	&\leq -\limsup_{n\to +\infty}\int_{\Omega}G(x,\D u_n)\dx
	\\
	&\quad +\int_{\Omega}CJ(x,\D u)+\alpha(x)\dx.
\end{align*}
Now removing the finite term $\int_{\Omega}CJ(x,\D u)+\alpha(x)\dx$ from both sides, the claim follows by rearrangement.

%
\end{proof}
	
\appendix

\section{The autonomous case on locally strongly star-shaped sets}\label{app:stronglystarshaped}
In this appendix we provide the necessary details to extend our results from open, bounded sets with Lipschitz boundary to the more general class of open, bounded sets that are locally star-shaped. We first recall their definition.

\begin{definition}[(Locally) strongly star-shaped sets] 
	\begin{itemize}
		\item[i)] An open bounded set $\Omega$, which is star-shaped with respect to a point $z\in O$, is called strongly star-shaped if the relative interior of each segment from $z$ to a point in $\partial \Omega$ is contained in $\Omega$.
		\item[ii)] A bounded, open set $\Omega$ is called locally strongly star-shaped if for every $x\in\partial \Omega$ there exists an open neighborhood $O$ of $x$ such that $O\cap \Omega$ is strongly star-shaped.
	\end{itemize}
\end{definition}
It is a classical result that bounded, open sets with Lipschitz boundary are locally strongly star-shaped, but as shown in \cite[Proposition 2.10]{BMT} the boundary of such a set can also have particular cusps. However, as the following elementary lemma shows, the boundary always has measure zero. This fact was used in the proof of Lemma \ref{l.compactsupport}.
\begin{lemma}\label{l.boundarynull}
	Let $\Omega$ be locally strongly star-shaped. Then $|\partial \Omega|=0$.
\end{lemma}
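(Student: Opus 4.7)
The plan is to localize to the strongly star-shaped case and then reduce the measure of the boundary to the measure of a graph in polar coordinates. First I would use that $\partial\Omega$ is closed and $\Omega$ is bounded, so $\partial\Omega$ is compact. For each $x\in\partial\Omega$ pick the open neighborhood $O_x$ supplied by the local strong star-shapedness, so that $O_x\cap\Omega$ is strongly star-shaped, and extract a finite subcover $\{O_i\}_{i=1}^N$ of $\partial\Omega$. Since each $O_i\cap\Omega$ is open, a direct set-theoretic check gives
$$\partial\Omega\cap O_i\subset \partial(O_i\cap\Omega),$$
using that any $x\in\partial\Omega\cap O_i$ lies in $\overline{O_i\cap\Omega}$ (approximate by an interior sequence, which eventually enters $O_i$) but not in $O_i\cap\Omega$. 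Hence it suffices to prove $|\partial U|=0$ whenever $U$ is an open bounded strongly star-shaped set.

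So fix such a $U$, star-shaped with respect to $z\in U$, and define the radial function
$$r(\theta)\defas\sup\{t\geq 0: z+t\theta\in U\},\qquad \theta\in\Sd,$$
which is positive (since $U$ is open around $z$) and finite (since $U$ is bounded). Star-shapedness together with openness of $U$ yields $\{t\geq 0: z+t\theta\in U\}=[0,r(\theta))$. The key point is then the following characterization: every $y\in\partial U$ is of the form $y=z+r(\theta)\theta$ for some $\theta\in\Sd$. Indeed, writing $y-z=t\theta$ with $t>0$, the strong star-shapedness forces $z+s\theta\in U$ for all $s\in(0,t)$, so $t\leq r(\theta)$; and $y\notin U$ gives $t\geq r(\theta)$.

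With this in hand, I conclude by Fubini in polar coordinates centered at $z$:
$$|\partial U|=\int_{\Sd}\int_0^\infty \mathds{1}_{\partial U}(z+\rho\theta)\,\rho^{d-1}\,\mathrm{d}\rho\,\dHd(\theta).$$
For each fixed $\theta$, the previous characterization shows $\{\rho>0: z+\rho\theta\in\partial U\}\subset\{r(\theta)\}$, so the inner integral vanishes. Measurability is not an issue because $\partial U$ is Borel. Summing over the finite cover gives $|\partial\Omega|\leq\sum_{i=1}^N|\partial(O_i\cap\Omega)|=0$.

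The only delicate step is the characterization of $\partial U$ as a ``radial graph'' in the strongly star-shaped case; once that is in place, the Fubini argument is routine. The localization step is purely formal provided one verifies the inclusion $\partial\Omega\cap O_i\subset\partial(O_i\cap\Omega)$ carefully, using that $O_i\cap\Omega$ is open.
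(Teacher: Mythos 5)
Your proof is correct. The reduction to a single strongly star-shaped set and the key geometric fact --- that each ray from the star center meets the boundary in at most one point, via the radial function --- are exactly as in the paper; the two arguments diverge only in the final measure-theoretic step. You integrate in polar coordinates and observe that each radial slice of $\partial U$ is a singleton, hence null, and conclude by Fubini. The paper instead notes that the radial characterization makes the dilates $(s\,\partial U)_{s>0}$ pairwise disjoint, so only countably many can have positive measure, while $|s\,\partial U|=s^d|\partial U|$ would give uncountably many if $|\partial U|>0$. Both closings are one-line consequences of the same lemma; yours is perhaps the more routine computation, the paper's avoids polar coordinates altogether. Your localization (covering only $\partial\Omega$ and using $\partial\Omega\cap O_i\subset\partial(O_i\cap\Omega)$) is also a harmless variant of the paper's use of $\partial(A\cup B)\subset\partial A\cup\partial B$ for a finite star-shaped cover of $\Omega$.
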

\begin{proof}
	Since $\overline{\Omega}$ is compact, we can write $\Omega$ as a finite union of strongly-star shaped sets. Using the inclusion $\partial (A\cup B)\subset\partial A\cup\partial B$, it thus suffices to show that the boundary of any strongly star-shaped set has zero measure. Hence we assume that $\Omega$ is strongly star-shaped. Moreover, up to a translation it is not restrictive to assume that the center point $z$ is the origin. Consider its radial function $r_\Omega:\mathbb{S}^{d-1}\to (0,+\infty)$ defined by
	\begin{equation*}
		r_\Omega(\nu)= \sup\{\lambda>0:\lambda\nu\in \Omega\}.
	\end{equation*}
	Since $\Omega$ is strongly star-shaped, it follows that $\nu\mapsto r_\Omega(\nu)\nu$ defines a bijection between $\mathbb{S}^{d-1}$ and $\partial \Omega$. Hence the family $(s\partial \Omega)_{s>0}$ is pairwise disjoint, so that only countably many of the sets can have positive measure. But $|s\partial \Omega|=s^d|\partial \Omega|$, which implies that $|\partial \Omega|=0$.
\end{proof}
The following result is \cite[Proposition 2.11]{BMT}.
\begin{lemma}\label{l.stronglystar}
	Let $\Omega$ be a bounded, open set that is locally strongly star-shaped. Then there exist finitely many open sets $G_1,\ldots,G_k\subset\R^d$ that cover $\partial \Omega$ and such that each set $U_i:=G_i\cap \Omega$ is strongly star-shaped with respect to some $z_i\in U_i$. Moreover, there exists $\delta>0$ such that for all $\lambda\in (1-\delta,1)$ it holds that 
	\begin{equation}\label{eq:compactlycontained}
		(z_i+\lambda(\Omega-z_i))\cap U_i\subset\subset \Omega\qquad\text{ for all }i=1,\ldots,k.
	\end{equation}
\end{lemma}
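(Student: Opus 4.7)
The plan is to establish the two parts of the lemma — the existence of the cover and the compact containment — in sequence.

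For the covering, I would apply the definition of locally strongly star-shaped pointwise: for each $x\in\partial\Omega$ there exists an open neighborhood $O_x$ of $x$ such that $O_x\cap\Omega$ is strongly star-shaped with respect to some center $z_x\in O_x\cap\Omega$. Since $\partial\Omega\subset\overline\Omega$ is compact, I would extract a finite subcover $G_1,\ldots,G_k$ with corresponding centers $z_1,\ldots,z_k$ and set $U_i:=G_i\cap\Omega$.

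For the compact containment, the natural strategy is a proof by contradiction. Fix $i$ and pick an arbitrary point $x$ in the closure of $(z_i+\lambda(\Omega-z_i))\cap U_i$. Writing $x$ as a limit of points $x_n=z_i+\lambda(y_n-z_i)$ with $y_n\in\Omega$, one has $y_n\to y:=z_i+\lambda^{-1}(x-z_i)\in\overline\Omega$ and $x\in\overline{U_i}$. Assuming $x\in\partial\Omega$ for contradiction, I would invoke the following dichotomy: either $y\in\overline{U_i}$, in which case the strong star-shapedness of $U_i$ with respect to $z_i$ forces the open segment $(z_i,y)$ to lie in $U_i$, so that $x=z_i+\lambda(y-z_i)\in U_i\subset\Omega$, contradicting $x\in\partial\Omega$; or $y\notin\overline{U_i}$, which, using $G_i\cap\overline\Omega\subset\overline{U_i}$ (valid since $G_i$ is open), forces $y\notin G_i$. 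In this second subcase, for $\lambda$ close to $1$ the quantity $|y-x|=(\lambda^{-1}-1)|x-z_i|$ becomes smaller than the distance from $x$ to $\R^d\setminus G_i$, producing a contradiction with $y\notin G_i$, provided $x$ lies in the open set $G_i$ and not on $\partial G_i$.

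The principal obstacle is therefore to ensure that the accumulation points $x\in\overline{U_i}\cap\partial\Omega$ cannot lie on $\partial G_i$. This I would address by an initial refinement of the cover so that $\overline{U_i}\cap\partial\Omega\subset G_i$ for each $i$, for instance by choosing each $G_i$ to be an open ball with a generic radius, exploiting that strong star-shapedness is preserved under such modifications. Once this refinement is in place, the positive distance $r_i:=\dist(\overline{U_i}\cap\partial\Omega,\R^d\setminus G_i)$ exists by compactness of $\overline{U_i}\cap\partial\Omega$ and openness of $G_i$, and the uniform $\delta$ of the statement can be taken as $\min_i r_i/(\mathrm{diam}(\Omega)+r_i)$, completing the argument.
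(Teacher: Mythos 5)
The paper does not actually prove this lemma; it is quoted verbatim from \cite[Proposition 2.11]{BMT}, so your argument has to be judged on its own. Its skeleton is correct: the finite subcover from compactness of $\partial\Omega$, the representation of a closure point as $x=z_i+\lambda(y-z_i)$ with $y\in\overline{\Omega}$ and $x\in\overline{U_i}$, and the observation that whenever $y\in\overline{U_i}$ the (strong) star-shapedness of $U_i$ places $x$ on the open segment $(z_i,y)\subset U_i\subset\Omega$, contradicting $x\in\partial\Omega$. You also correctly isolate the only dangerous configuration, namely $x\in\partial G_i\cap\partial\Omega\cap\overline{U_i}$.

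The gap is that the device you propose to exclude that configuration --- refining the cover so that $\overline{U_i}\cap\partial\Omega\subset G_i$ --- cannot be realized. Since $G_i$ is open and every point of $\partial\Omega$ is a limit of points of $\Omega$, one always has $G_i\cap\partial\Omega\subset\overline{U_i}\cap\partial\Omega$; your inclusion would therefore force $G_i\cap\partial\Omega=\overline{U_i}\cap\partial\Omega$ to be simultaneously relatively open and closed in $\partial\Omega$, hence equal to all of $\partial\Omega$ when $\partial\Omega$ is connected --- incompatible with $U_i$ being star-shaped for a genuinely non-star-shaped $\Omega$. Concretely, for $\Omega=B_1(0)\subset\R^2$ and $G_i=B_r(x_0)$ with $x_0\in\partial\Omega$, the points of $\partial B_r(x_0)\cap\partial B_1(0)$ belong to $\overline{U_i}$ for \emph{every} radius $r$, so no generic choice helps. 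The standard repair is a two-scale construction: keep the neighborhoods $O_i$ furnished by the definition (so $O_i\cap\Omega$ is strongly star-shaped with respect to $z_i$) and take $G_i$ to be an open convex set containing $z_i$ and the covered boundary point with $\overline{G_i}\subset O_i$ (for instance a thin convex neighborhood of the segment joining them); then $U_i=G_i\cap(O_i\cap\Omega)$ is still strongly star-shaped with respect to $z_i$, and in the dangerous case one imposes $\lambda^{-1}-1<\dist(\overline{G_i},\R^d\setminus O_i)/\operatorname{diam}(\Omega)$, which yields $y\in O_i\cap\overline{\Omega}\subset\overline{O_i\cap\Omega}$, so that the strong star-shapedness of the \emph{larger} set $O_i\cap\Omega$ gives $x\in(z_i,y)\subset O_i\cap\Omega\subset\Omega$, the desired contradiction. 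Your distance-versus-diameter computation for the uniform $\delta$ then goes through verbatim with $\dist(\overline{G_i},\R^d\setminus O_i)$ in place of your $r_i$.
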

With the above lemma at hand, we can adapt the proof of Lemma \ref{l.compactsupport} as follows: instead of covering the boundary with the cylinders $C_x$, we can use an open set $G_x$ such $x\in G_x$ and such that there exists $z_x\in U_x=G_x\cap\Omega$ satisfying \eqref{eq:compactlycontained}, so in particular for a sequence $\rho_k\uparrow 1$ we have 
\begin{equation}\label{eq:inclusion}
	\dist((z_x+\rho_k(\overline{\Omega}-z_x))\cap U_x,\partial \Omega)=d_{k}>0.
\end{equation}
Note that $(\rho_k)_{k\in\N}$ and $d_{k}$ do not depend on $x$. The point $z_i$ is then replaced by the point $z_{x_i}$ in a finite subcover, while the remaining construction remains the same. In this case the function $v_k$ still has compact support in $\Omega$. Indeed, for $x\in U_{x_i}$ with $\dist(x,\partial \Omega)< d_k$ the equation \eqref{eq:inclusion} yields that $x\notin z_i+\rho_k(\overline{\Omega}-z_i)$ and since $v=0$ outside of $\overline{\Omega}$ we conclude that
\begin{equation}
	v_{k,i}=0\qquad\text{ on }U_{x_i}\cap\{\dist(\cdot,\partial \Omega)<d_k\}.
\end{equation}
Hence the convolution has to be performed at a scale $\e_k\sim d_k $, which in general cannot be compared to $\left|\frac{1}{\rho_k}-1\right|$. However, this was only used for applying the stability estimate \eqref{eq:stability}, which is not needed in the autonomous setting. The remaining arguments did not rely on Lipschitz regularity of the boundary, provided we define $W_0^{1,p}(\Omega)^m$ as the norm-closure of $C_c^{\infty}(\Omega)^m$.

\end{document}